\newcommand{\ip}[2]{\left \langle #1,\ #2\right \rangle}
\DeclareMathOperator*{\esssup}{ess\,sup}
\DeclareMathOperator*{\essinf}{ess\,inf}
\newcommand{\R}{\mathbb{R}}      
\newcommand{\C}{\mathbb{C}}      
\newcommand{\D}{\mathbb{D}}	     
\newcommand{\T}{\mathbb{T}}	     
\newtheorem{theorem}{Theorem}[section]
\newtheorem{proposition}[theorem]{Proposition}
\newtheorem{lemma}[theorem]{Lemma}
\newtheorem{remark}[theorem]{Remark}
\newtheorem{question}[theorem]{Question}
\begin{document}

\title[Spectra for Toeplitz Operators associated with $\mathscr{A}_{a,b}$]{Spectra  for Toeplitz Operators associated with a  Constrained Subalgebra
}

\author[Felder]{Christopher Felder\,\orcidlink{0000-0002-0736-0987}}
\address{Department of Mathematics and Statistics, Washington University In St. Louis, St. Louis, Missouri, 63130}
\email{cfelder@wustl.edu}
\thanks{The first named author was supported in part by NSF Grant DMS-1565243.}

\author[Pfeffer]{Douglas T. Pfeffer\,\orcidlink{0000-0001-8287-2686}}
\thanks{}
\address{Department of Mathematics and Computer Science, Berry College, PO Box 495014, Mount Berry GA 30149-5014}
\email{dpfeffer@berry.edu}

\author[Russo]{Benjamin P. Russo\,\orcidlink{0000-0002-6089-0696}}
\address{Computer Science and Mathematics Division, Oak Ridge National Laboratory, Oak Ridge, TN 37831, United States}
\email{russobp@ornl.gov}
\thanks{Notice: This manuscript has been authored, in part, by UT-Battelle, LLC, under contract DE-AC05-00OR22725 with the US Department of Energy (DOE). The US government retains and the publisher, by accepting the article for publication, acknowledges that the US government retains a nonexclusive, paid-up, irrevocable, worldwide license to publish or reproduce the published form of this manuscript, or allow others to do so, for US government purposes. DOE will provide public access to these results of federally sponsored research in accordance with the DOE Public Access Plan (\href{http://energy.gov/downloads/doe-public-access-plan}{http://energy.gov/downloads/doe-public-access-plan}).}

\subjclass{Primary 47B35; Secondary 47A10, 47A75}

\keywords{Toeplitz operator, constrained subalgebra, spectrum, point spectrum}

\date{}
\maketitle

\begin{abstract}
A two-point algebra is a set of bounded analytic functions on the unit disk that agree at two distinct points $a,b \in \D$. This algebra serves as a multiplier algebra for the family of Hardy Hilbert spaces $H^2_t := \{ f\in H^2 : f(a)=tf(b)\}$, where $t\in \C\cup\{\infty\}$. We show that various spectra of certain Toeplitz operators acting on these spaces are connected. 
\end{abstract}

\section{Introduction and Motivation}\label{intro}
For $1 \le p \le \infty$, let $L^p = L^p(\T, \mu)$ denote the classical Lebesgue spaces on the unit circle $\T$ with respect to normalized Lebesgue measure $\mu$. Similarly, let $H^p$ denote the usual Hardy spaces. 
Recall the standard identification of $H^p$ as spaces of analytic functions on the unit disk $\D$ such that
\[
\sup_{0 < r < 1} \ \int_0^{2\pi} \left|f\left(re^{it}\right)\right|^p \, dt < \infty.
\]
When $p=\infty$, these spaces are comprised of (essentially) bounded functions. 

For $\phi \in L^\infty$, the classical \textit{Toeplitz operator with symbol $\phi$} acts on $H^2$ by $T_\phi f = P_+(\phi f)$, where $P_+$ is the orthogonal projection from $L^2$ onto $H^2$. 

In 1963, Halmos asked if the spectrum of every Toeplitz operator, $\sigma(T_\phi)$, is connected \cite{halmos}. 
At that time, the following facts were known about the spectrum $\sigma$ of Toeplitz operators, due to Hartman and Wintner (e.g. see \cite[Chapter 7]{banachtech}):
\begin{itemize}
    \item[-] If $\phi \in L^\infty$ is real-valued, then $\sigma(T_\phi) = [\essinf(\phi), \esssup(\phi)]$.
    \item[-] If $\phi \in H^\infty$, then $\sigma(T_\phi)=\overline{\phi(\D)}$.
\end{itemize}  
Shortly after, Widom gave an affirmative answer to Halmos' question with the following result \cite{widomspectrum}:
\begin{itemize}
    \item[-] Every Toeplitz operator has connected spectrum and connected essential spectrum.
\end{itemize}
It can also be shown that when the symbol of the Toeplitz operator is real-valued, its point spectrum $\sigma_p$ is empty, and therefore connected (e.g. see \cite[Exercise 12.4.3]{theoryofhb}).
We point the reader to \cite{MR2743417} for a well-written account of this history. 

The aim of this paper is to present results in this vein, but for  Toeplitz operators acting on sub-Hardy Hilbert spaces arising naturally from certain finite codimension subalgebras of $H^\infty$; so-called \textit{constrained subalgebras}.

The most well-known constrained subalgebra of $H^\infty$ is the Neil algebra,
\[
\mathfrak{A} := \{  f\in H^\infty :  f'(0)=0 \}.
\]
Note that $H^\infty$ acts as a multiplier algebra for $H^2$; for each $\phi \in H^\infty$, we have $\phi H^2 \subseteq H^2$. When moving to the Neil  setting, we have that $\mathfrak{A}$ serves as a multiplier algebra not for a single space, but for a continuum of spaces
\[
H^2_{\alpha,\beta} = \{  f\in H^2 :  \alpha f(0) = \beta f'(0)  \textrm{ for }  (\alpha,\beta)\in \mathbb{S}^2 \},
\]
where $\mathbb{S}^2$ is the complex unit-sphere in $\C^2$ (see \cite{DPRS} for details). Although $\mathfrak{A}$ serves as a multiplier algebra for many Hilbert spaces of analytic functions, 
it is canonical to consider $\mathfrak{A}$ acting on
the spaces $H^2_{\alpha,\beta}$, as the associated representations of $\mathfrak{A}$ are rank one bundle shifts in this setting (see \cite[Section 4]{broschinski2014eigenvalues}). This perspective is useful in many contexts, however, is not needed for this paper.

For $\phi \in L^\infty$, the Toeplitz operator $T^{\alpha,\beta}_\phi$ acts on $H^2_{\alpha,\beta}$  by
\[
T^{\alpha,\beta}_\phi f = P_{\alpha,\beta} (\phi f),
\]
where $P_{\alpha,\beta}$ is the orthogonal projection from $L^2$ onto $H^2_{\alpha,\beta}$. 

In the Neil setting, Broschinski  \cite{broschinski2014eigenvalues} observed that the point spectrum of $T^{\alpha,\beta}_\phi$ might be non-empty; a stark difference from the classical setting. However, in a Widomesque quest, Broschinksi showed that the point spectrum of these operators \textit{relative to the algebra $\mathfrak{A}$} is indeed connected for certain symbols. In particular, Broschinski proved that when $\phi\in L^\infty$ is real-valued, the set of \textit{eigenvalues of $T^{\alpha,\beta}_\phi$ relative to $\mathfrak{A}$},
\[
\Lambda_\phi^{\mathfrak{A}} := \bigcup_{(\alpha,\beta)\in \mathbb{S}^2} \sigma_p(T^{\alpha,\beta}_\phi)
\]
is either empty, a point, or an interval.
Heuristically speaking, when working in settings that involve infinite families of representation-carrying Hardy spaces, it is typical for results to involve the entire family (e.g. see discussion in Section \ref{background}).

This paper provides, among other results, an analogue of Broschinski's result in the setting of a so-called two-point algebra.
The two-point algebra associated to fixed points $a,b\in \D$ is
\[
\mathscr{A}_{a,b} := \{  f\in H^\infty(\D) : f(a)=f(b)\}.
\]
Similar to the Neil algebra, there is an associated infinite family of sub-Hilbert Hardy spaces that each carry a representation for $\mathscr{A}_{a,b}$. For fixed $a,b\in \D$, define
\[
H^2_t := \{ f\in H^2 : f(a) = tf(b) \},
\]
where $t\in \widehat{\C} := \C \cup \{ \infty \}$. Note that, while the parameter space $\C\setminus\{0\}$ could also be used to yield the main results in this paper, we instead use the  Riemann sphere $\widehat{\C}$ to be consistent with the notation introduced in \cite{pfefferjury}, where compactness of the parameter space was necessary. To this end, this paper considers $t\neq 0,\infty$, and leaves the details of these cases to the interested reader. For each choice of $t\in \widehat{\C}$, the space $H^2_t$ carries a representation for $\mathscr{A}_{a,b}$.
Specifically, the map taking $h \in \mathscr{A}_{a,b}$ to the operator on $H^2_t$ given by $f \mapsto h f$, is an isometric homomorphism from $\mathscr{A}_{a,b}$ to the collection of bounded linear operators on $H^2_t$. In particular, we have $\mathscr{A}_{a,b}H^2_t\subseteq H^2_t$ for every $t\in \widehat{\C}$. As with the Neil algebra, the representations associated with $H^2_t$ can be seen as rank one bundle shifts for $\mathscr{A}_{a,b}$.

For $\phi \in L^\infty$, we define the Toeplitz operator $T^t_\phi\colon H^2_t\to H^2_t$ by 
\[
T^t_\phi f = P_{t}(\phi f),
\]
where $P_{t}$ is the orthogonal projection from $L^2$ onto $H^2_t$. We denote the \textit{eigenvalues of $T^t_\phi$ relative to $\mathscr{A}_{a,b}$} as
\[
\Lambda_\phi^{a,b} := \bigcup_{t\in \widehat{\C}} \sigma_p(T^{t}_\phi).
\]
In order to avoid trivialities when discussing $\Lambda_\phi^{a,b}$, it is always assumed that the symbol $\phi$ is non-constant. 

In this paper, we establish the following main results, regarding analytic and real-valued symbols, respectively:

\vspace{.5pc}

\noindent \textbf{Theorem \ref{mainresult1}.} \textit{If $\phi\in \mathscr{A}_{a,b}$, then
\begin{enumerate}[(i)]
    \item $\sigma(T^t_\phi) = \overline{\phi(\D)}$ and \label{main1item1}
    \item $\Lambda_\phi^{a,b} = \{ \phi(a)\}$. \label{main1item2}
\end{enumerate} In particular, both $\sigma(T^t_\phi)$ and $\Lambda_\phi^{a,b}$ are connected.}

\vspace{.5pc}

\noindent \textbf{Theorem \ref{mainresult2}.} \textit{If $\phi\in L^\infty$ is real-valued, then $\Lambda_\phi^{a,b}$ is either empty, a point, or an interval. In particular, $\Lambda_\phi^{a,b}$ is connected.}

\vspace{.5pc}

\section{Background}\label{background}
Before moving to results, we provide further background on the spectral theory of Toeplitz operators, as well as some relevant information on constrained subalgebras. 

Investigations into the spectra of Toeplitz operators are diverse and far-reaching, beginning with spectral inclusion theorems for self-adjoint Toeplitz operators, which originated in \cite{hw1, hw2, brownhalmos}. 
Results involving the spectra of Toeplitz operators with continuous or piece-wise continuous symbols can be found in \cite{contsymbol1, contsymbol2, contsymbol3, pwcontsymbol1, dev1}.
The spectrum of a Toeplitz operator with analytic symbol was first investigated in \cite {analyticsymbol1}. Halmos initially posed the connected-spectrum question in \cite{halmos}, which was answered by Widom in \cite{widomspectrum, widomspectrum2}.

Following these investigations, there were several works aimed at exploring the spectra of Toeplitz operators acting on spaces other than $H^2$. For example, Devinatz \cite{dev1} examined the spectra of real-symboled Toeplitz operators defined on a Hilbert space associated with a Dirichlet algebra. Another natural theme in this arena has been to consider Toeplitz operators acting on Hilbert spaces of analytic functions defined on assorted domains. Early work on multiply-connected planar domains was due to Abrahamse, where he established the existence of self-adjoint Toeplitz operators (relative to a single model space) with disconnected point spectrum \cite[Section 5]{AbrahamseToeplitz}. Further investigations include \cite{clancey1, aryana1, akbari1, clancey2}, where more in-depth spectral analyses were conducted, including the study of large numbers of isolated eigenvalues in the gaps of the range of the symbol of the operator. The case of the annulus was further discussed in \cite{broschinski2014eigenvalues}, where a version of Theorem \ref{mainresult2} is established. Additional work on multiply connected domains can be found in  \cite{Ballconstrained, Mult-ConnDom}.

The introduction of Toeplitz operators to the setting of constrained subalgebras came about, in part, through the study of Pick interpolation. In short, the Pick interpolation problem asks for a multiplier, of norm less than or equal to one, that maps a set of initial points to a set of target points. It is natural to ask the interpolating function to obey additional algebraic constraints. In this direction, Pick interpolation on the Neil algebra was investigated in \cite{DPRS}, and on a generalized two-point algebra in \cite{Raghupathi2point}. As a result, further work on the associated Toeplitz operators has been carried out. In \cite{Anderson-ROchberg}, Anderson and Rochberg established a Widom-type invertibility theorem for Toeplitz operators associated with constrained subalgebras of the unit disk. In \cite{pfefferjury}, Jury and the second named author established  Widom and Szeg\H{o} theorems for finite codimensional subalgebras of a class of uniform algebras defined on finite (connected) Riemann surfaces-- of which the Neil and two-point algebras are the prototypical examples. Such Szeg\H{o} and Widom theorems for Toeplitz operators on the Neil algebra were first discussed in \cite{balasubramanian2019szegHo}. For more results related to constrained algebras, see \cite{Ballconstrained, dritschelconstrained, Raghupathi2point, Trent1, Trent2, Banjade}.

\section{Structure of $H^2_t$} \label{preliminaries}
We now move to establish the results of the paper, starting with some structural observations of $H^2_t$ spaces. First, recall that $H^2$ is a reproducing kernel Hilbert space on the unit disk $\D$. That is, for each $f\in H^2$ and $w \in \D$, we have 
\[
f(w) = \langle f, k_w \rangle := \int_\T f \overline{k_w} \, d\mu,
\]
where, for $w,z \in \D$,
\[
k_w(z) = \frac{1}{1-\overline{w}z}.
\]
The function $k_w$ is known as the \textit{Szeg\H{o} kernel at $w$}.
The spaces $H^2_t$ inherit the reproducing property from $H^2$, and we denote their reproducing kernels by $k_w^t$. We will also need the Blaschke product at $a$ and $b$, given by
\[
B_{a,b}(z) = \frac{z - a}{1 - \overline{a}z}\frac{z - b}{1 - \overline{b}z}.
\]

We take the convention that $0\cdot \infty = 0$ on $\widehat{\C}$.

\begin{proposition} \label{onbasis}
Let $t \in \C\setminus\{0\}$. For each $H^2_t$ space associated to $a,b \in \D$, we have
\begin{enumerate}[(i)]
\item $k_a^t = \overline{t}k_b^t$. \label{onbasis1}
\item $H^2_t =\C k_a^t \oplus B_{a,b}H^2$. \label{onbasis2}
\item The set $\{k_a^t/\|k_a^t\|\}\cup \{B_{a,b} z^n\}_{n\ge 0}$ is an orthonormal basis for $H^2_t$. \label{onbasis3}
\end{enumerate}

\end{proposition}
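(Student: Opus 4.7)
The plan is to exploit the fact that $H^2_t$ is the kernel of the bounded linear functional $f \mapsto f(a) - tf(b)$ on $H^2$, hence a closed subspace that inherits a reproducing-kernel structure from $H^2$. From there, I would handle parts (i)--(iii) in order, keeping $t \in \C \setminus \{0\}$ throughout and treating the boundary cases $t \in \{0, \infty\}$ separately at the end.

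For part (i), the approach is to reproduce $f(a)$ and $f(b)$ via the respective kernels: for every $f \in H^2_t$,
\[
\langle f, k_a^t \rangle = f(a) = t\, f(b) = \langle f, \overline{t}\, k_b^t \rangle,
\]
so $k_a^t - \overline{t}\, k_b^t$ is orthogonal to all of $H^2_t$. Since this element itself lies in $H^2_t$ (as a linear combination of reproducing kernels of $H^2_t$), it must vanish, giving (i).

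For part (ii), the easy containment $\C k_a^t + B_{a,b} H^2 \subseteq H^2_t$ follows because $k_a^t \in H^2_t$ by definition, while $B_{a,b}$ vanishes at both $a$ and $b$, so any $B_{a,b} g$ trivially satisfies $f(a) = t f(b) = 0$. Orthogonality is immediate from the reproducing property: $\langle k_a^t, B_{a,b} g \rangle = (B_{a,b} g)(a) = 0$. The reverse containment is the crux: given $f \in H^2_t$, I would observe that both $f$ and $k_a^t$ satisfy the same ratio at $a$ versus $b$ (namely $t$), so the scalar $c = f(a)/k_a^t(a)$ yields a function $f - c k_a^t$ vanishing at both $a$ and $b$, which is therefore divisible in $H^2$ by the Blaschke product $B_{a,b}$.

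The subtle point, and the one I expect to take the most care, is justifying that $k_a^t(a) = \|k_a^t\|^2 \neq 0$ when $t \neq 0$, so that the scalar $c$ above is well defined. If $k_a^t$ were zero, then every $f \in H^2_t$ would satisfy $f(a) = 0$, forcing the codimension-one subspace $H^2_t$ to coincide with $\{f \in H^2 : f(a) = 0\} = B_a H^2$; but matching of the defining functionals would then force $t = 0$, a contradiction. With part (ii) established, part (iii) follows because $\{B_{a,b} z^n\}_{n \geq 0}$ is an orthonormal basis for $B_{a,b} H^2$ (as $B_{a,b}$ is inner and $\{z^n\}$ is an ONB of $H^2$), and $k_a^t/\|k_a^t\|$ provides the remaining orthonormal vector from the $\C k_a^t$ summand. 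Finally, $H^2_0 = \{f \in H^2 : f(a) = 0\} = B_a H^2$ is direct from the definition, while for $H^2_\infty$ the convention $0 \cdot \infty = 0$ forces $f(b) = 0$ (otherwise $\infty \cdot f(b)$ is not a complex number equal to $f(a)$) and hence $f(a) = \infty \cdot 0 = 0$, yielding $H^2_\infty = B_{a,b} H^2$.
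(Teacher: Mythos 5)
Your proof is correct, and on parts (i), (iii), and the boundary cases it lines up with the paper's (which simply asserts that (i) follows from the definition and derives (iii) from the fact that multiplication by the inner function $B_{a,b}$ is an isometry). Where you genuinely diverge is the reverse inclusion in (ii). The paper argues in the orthogonal-complement direction: it takes $f \in H^2_t$ with $\langle f, k_a^t\rangle = 0$, uses (i) to conclude $f(a) = f(b) = 0$ and hence $f \in B_{a,b}H^2$, and combines this with the easy converse to identify $B_{a,b}H^2$ as the orthocomplement of $\C k_a^t$ inside $H^2_t$; the decomposition then falls out of the Hilbert-space projection theorem, with no need to know in advance that $k_a^t \neq 0$. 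You instead build the decomposition by hand, subtracting $c\,k_a^t$ with $c = f(a)/k_a^t(a)$ and dividing the remainder by $B_{a,b}$, which forces you to prove the extra lemma $k_a^t(a) = \|k_a^t\|^2 \neq 0$. Your codimension argument for that lemma is sound --- if $k_a^t = 0$ then $H^2_t \subseteq B_aH^2$, equality follows since both are kernels of nonzero functionals, and proportionality of $k_a - \overline{t}k_b$ and $k_a$ forces $t = 0$ --- but it is work the paper's route sidesteps entirely. What your route buys in exchange is an explicit formula for the $\C k_a^t$-component of a given $f$, which the projection-theorem argument leaves implicit.
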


\begin{proof}
Point (\ref{onbasis1}) follows directly from definition. 
In order to establish (\ref{onbasis2}), we show $B_{a,b}H^2$ is the orthogonal complement of  $\C k_a^t$ in $H^2_t$. Suppose $f\in H^2_t$ with $\langle f, k_a^t \rangle =0$. Then $\langle f, \overline{t}k_b^t \rangle =0$, and we see $f$ vanishes at $a$ and $b$, so $f \in B_{a,b}H^2$. Conversely, suppose $f \in B_{a,b}H^2$. Since $B_{a,b}H^2 \subset H^2_t$, we have $f \in H^2_t$ and $f(a) = 0$. Thus, $f$ is orthogonal to $\C k_a^t$, and the result follows. Point (\ref{onbasis3}) now follows immediately by noting that, in $H^2$, multiplication by $B_{a,b}$ is an isometry and the monomials are orthogonal.

\end{proof}

Using the above proposition, a precise formula for the reproducing kernel can be given. Specifically,
\[
k^t_a(z) = \frac{\overline{t}}{\overline{t} - \tau}(k_a(z) - \tau k_b(z)),
\]
where $\tau = \frac{k_a(a) - tk_a(b)}{k_b(a) - tk_b(b)}$. Further, 
for $z,w \in \D$, the reproducing kernel for $H^2_t$ is given by
\[
k^t_w(z) = \frac{\overline{k^t_a(w)}}{\|k^t_a\|^2}k^t_a(z) + \overline{B_{a,b}(w)}B_{a,b}(z)k_w(z).
\]

We end this section with a proposition about the inner-outer factorization of a function $g\in \ker(T^t_\phi)$. Specifically, we note that the outer factor of such a function $g$ must also be in the kernel of a Toeplitz operator (with a possibly different parameter $t$).

\begin{proposition}\label{praisejesus}
Let $\phi \in L^\infty$ and suppose $g\in \operatorname{ker}(T_\phi^t)$. Put $g = \theta G$ with $\theta$ inner and $G$ outer. Then $G\in \operatorname{ker}(T_\phi^s)$, where $s = G(a)/G(b)$ (i.e. $G\in H^2_s$). 
\end{proposition}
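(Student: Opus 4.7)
The plan is to reformulate both kernel conditions as $L^2$-orthogonality statements: $g \in \ker(T^t_\phi)$ if and only if $\phi g \perp H^2_t$ in $L^2$, and analogously $G \in \ker(T^s_\phi)$ if and only if $G \in H^2_s$ together with $\phi G \perp H^2_s$. Since $G$ is outer it does not vanish on $\D$, so $G(b) \neq 0$ and $s = G(a)/G(b)$ is a well-defined element of $\widehat{\C}$; moreover $G \in H^2_s$ is automatic from the very definition of $s$. The problem then reduces to showing $\langle \phi G, f \rangle_{L^2} = 0$ for every $f \in H^2_s$.

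The core of the argument is a short manipulation that leverages $|\theta|=1$ almost everywhere on $\T$. For $f \in H^2_s$ I would write
\[
\langle \phi G, f \rangle_{L^2} = \int_\T \phi \theta G \,\overline{\theta f}\, d\mu = \langle \phi g, \theta f \rangle_{L^2},
\]
so that if one can verify $\theta f \in H^2_t$, then the hypothesis $\phi g \perp H^2_t$ forces the inner product to vanish. This reduces everything to one membership check.

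The decisive step is therefore to show $\theta f \in H^2_t$ whenever $f \in H^2_s$. Applying the $t$-constraint to $g = \theta G$ gives $\theta(a) G(a) = t\,\theta(b) G(b)$, and dividing by $G(b)\neq 0$ yields the identity $s\,\theta(a) = t\,\theta(b)$. From $f(a) = s f(b)$ one then computes $(\theta f)(a) = \theta(a) s f(b) = t\,\theta(b) f(b) = t\,(\theta f)(b)$, giving $\theta f \in H^2_t$ as required. The main obstacle is really a bookkeeping one: handling the degenerate configurations in which $\theta$ vanishes at $a$ or $b$ (both $(\theta f)(a)$ and $(\theta f)(b)$ become zero so the $t$-constraint holds trivially for any $t$), or in which $s \in \{0,\infty\}$ (reconciled with the convention $0\cdot\infty = 0$ from Remark~\ref{kernels}). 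In each of these subcases the conclusion $\theta f \in H^2_t$ continues to hold, and the argument closes.
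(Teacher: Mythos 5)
Your proposal is correct and follows essentially the same route as the paper: reduce to showing $\langle \phi G, f\rangle = \langle \phi g, \theta f\rangle = 0$ via $|\theta|=1$ on $\T$, with the key step being that $\theta f \in H^2_t$ whenever $f \in H^2_s$ (the paper phrases this as $\theta \in H^2_{t/s}$). The only cosmetic difference is that since $G$ is outer it is non-vanishing at both $a$ and $b$, so $s \in \C\setminus\{0\}$ and the degenerate subcases $s \in \{0,\infty\}$ you worry about cannot actually arise.
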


\begin{proof}
Let $h\in H^2_s$ and note as $G$ is outer, it is non-vanishing on $\D$, which means $s\in \C \setminus \{0\}$. Since $g=\theta G \in H^2_t$ and $G\in H^2_s$, it follows that $\theta\in H^2_{t/s}$. In turn, $\theta h\in H^2_t$ and we have
\[
    \ip{T_\phi^s G}{h} =\int_\T \phi G\overline{h}\, d\mu 
    =\int_\T \phi g \overline{\theta h}\, d\mu 
    =\ip{T^t_\phi g}{\theta h}
    =0.
\]
As this holds for all $h \in H^2_s$, we have $G\in \ker(T^s_\phi)$. 
\end{proof}

\section{Toeplitz operators with analytic symbols} 

In this section, we provide a proof of Theorem \ref{mainresult1}. We begin by showing that the point spectrum of $T^t_\phi$ with analytic symbol is a singleton. Recall that $P_{t}$ is the orthogonal projection from $L^2$ onto $H^2_t$.
\begin{proposition} \label{analyticpointspectrum}
If $\phi \in H^\infty$ is non-constant, then
\[
\sigma_p(T_\phi^t) = \langle \phi k_a^t, k_a^t \rangle/\|k_a^t\|^2.
\]
\end{proposition}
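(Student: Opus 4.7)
The plan is to exploit the codimension-one structure $H^2 \ominus H^2_t = \C u$, where $u := k_a - \overline{t}k_b$ spans the orthogonal complement (since $\langle g, u\rangle = g(a) - tg(b)$ vanishes exactly on $H^2_t$). If $T^t_\phi f = \lambda f$ for some $f \in H^2_t \setminus \{0\}$, then $\phi f \in H^2$ (as $\phi \in H^\infty$), and the eigenvalue equation forces $(\phi - \lambda) f \in H^2 \cap \ker P_t = \C u$. So $(\phi - \lambda) f = cu$ for some $c \in \C$. Non-constancy of $\phi$ rules out $c = 0$ (else $f \equiv 0$); a short subcase argument, using that $u(a)$ and $u(b)$ cannot simultaneously vanish when $a \neq b$ (via the identity $|1 - \overline{a}b|^2 = (1-|a|^2)(1-|b|^2) + |a - b|^2$), also rules out $f(b) = 0$. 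Thus $c \neq 0$, $f(b) \neq 0$, and $f = cu/(\phi - \lambda)$ pointwise.

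Substituting this expression into the defining constraint $f(a) = tf(b)$ and cancelling $c$ yields
\[
\frac{u(a)}{\phi(a) - \lambda} = \frac{tu(b)}{\phi(b) - \lambda},
\]
which is linear in $\lambda$ and solves uniquely as
\[
\lambda = \frac{u(a)\phi(b) - tu(b)\phi(a)}{u(a) - tu(b)}.
\]
A direct reproducing-kernel calculation identifies the denominator as $\|u\|^2$, so the eigenvalue is pinned down uniquely by $\phi$ and $t$.

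The main obstacle will be the algebraic identification of this expression with $\lambda_0 := \langle \phi k_a^t, k_a^t\rangle/\|k_a^t\|^2$. I would approach this by decomposing $\phi k_a^t$ via $H^2 = H^2_t \oplus \C u$, giving
\[
P_t(\phi k_a^t) = \phi k_a^t - \frac{\langle \phi k_a^t, u\rangle}{\|u\|^2}\,u.
\]
Since $\langle u, k_a^t\rangle = 0$, one obtains $\langle \phi k_a^t, k_a^t\rangle = (P_t(\phi k_a^t))(a)$. Combining this with $\langle \phi k_a^t, u\rangle = (\phi(a) - \phi(b))\|k_a^t\|^2$ (which follows from the reproducing property together with the relation $k_a^t = \overline{t}k_b^t$ in Proposition \ref{onbasis}) and with $\|u\|^2 = u(a) - tu(b)$, the ratio $\langle \phi k_a^t, k_a^t\rangle/\|k_a^t\|^2$ collapses exactly to the formula from the previous paragraph, showing that any eigenvalue of $T^t_\phi$ must equal $\lambda_0$. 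The cases $t \in \{0,\infty\}$ are handled by the analogous simpler arguments indicated in Remark \ref{kernels}.
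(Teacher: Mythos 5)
Your proposal is correct, but it takes a genuinely different route from the paper. The paper works \emph{inside} $H^2_t$ via the decomposition $H^2_t = \C k_a^t \oplus B_{a,b}H^2$ of Proposition \ref{onbasis}: writing an eigenvector as $g = ck_a^t + B_{a,b}h$ and using that $\phi B_{a,b}H^2 \subseteq B_{a,b}H^2$, the eigenvalue is forced to equal the coefficient $c'$ of $k_a^t$ in $P_t(\phi k_a^t)$, so the Berezin transform $\langle \phi k_a^t, k_a^t\rangle/\|k_a^t\|^2$ appears immediately with no further algebra. You instead work with the \emph{external} decomposition $H^2 = H^2_t \oplus \C u$, $u = k_a - \overline{t}k_b$, reduce the eigenvalue equation to $(\phi-\lambda)f = cu$, and extract $\lambda$ by evaluating at $a$ and $b$; I checked the supporting identities ($\|u\|^2 = u(a) - tu(b)$, $\langle \phi k_a^t, u\rangle = (\phi(a)-\phi(b))\|k_a^t\|^2$ via $t\,k_a^t(b) = \|k_a^t\|^2$, and the non-simultaneous vanishing of $u(a)$, $u(b)$) and they all hold, and cross-multiplying your two evaluation identities yields $u(a)(\phi(b)-\lambda) = tu(b)(\phi(a)-\lambda)$ without ever dividing by $\phi(a)-\lambda$ or $\phi(b)-\lambda$, so the degenerate subcases cause no trouble. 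What your route buys is an explicit closed form $\lambda = \bigl(u(a)\phi(b) - tu(b)\phi(a)\bigr)/\|u\|^2$ in terms of the boundary data $\phi(a),\phi(b)$, which makes Theorem \ref{mainresult1}(ii) transparent (the numerator collapses to $\phi(a)\|u\|^2$ when $\phi(a)=\phi(b)$); the cost is the extra computation needed to reconcile this with the Berezin form, which the paper gets for free. One shared caveat: like the paper's argument, yours establishes only the containment $\sigma_p(T_\phi^t) \subseteq \{\langle \phi k_a^t, k_a^t\rangle/\|k_a^t\|^2\}$ --- neither proof exhibits an eigenvector, and indeed $f = cu/(\phi-\lambda)$ need not lie in $H^2$ --- so you are on equal footing with the paper there.
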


\begin{proof}
Suppose that $(T_\phi^t - \lambda I)g = 0$ for $\lambda \in \C$ and $g \in H^2_t\setminus\{0\}$. Using Proposition \ref{onbasis}, we can put $g = c k_a^t + B_{a,b}h$, for some $c \in \C$ and $h \in H^2$ to obtain
\begin{align*}
    0 &= T_\phi^t(c k_a^t + B_{a,b}h) -  \lambda(c k_a^t + B_{a,b}h)\\
    &= P_{t}(\phi (ck_a^t + B_{a,b}h)) -  \lambda(c k_a^t + B_{a,b}h)\\
    &= cP_{t}(\phi k_a^t) + P_{t}(\phi B_{a,b}h) -  \lambda(c k_a^t + B_{a,b}h)\\
    &= cP_{t}(\phi k_a^t) + \phi B_{a,b}h -  \lambda(c k_a^t + B_{a,b}h),
\end{align*}
where the last equality follows from the fact that $\phi B_{a,b}h \in B_{a,b}H^2 \subseteq H^2_t$. 
Now put $P_{t}(\phi k_a^t) = c'k_a^t + B_{a,b}f$ where $c':= \langle \phi k_a^t, k_a^t \rangle/\|k_a^t\|^2$ and $f\in H^2$. 
Then from the above equality we have
\[
0 = c(c'k_a^t + B_{a,b}f) + \phi B_{a,b}h -  \lambda(c k_a^t + B_{a,b}h)
\]
which implies that 
\[
B_{a,b}((\lambda - \phi)h - cf)= c(c' - \lambda)k_a^t.
\]
As the left hand side is in $B_{a,b}H^2$ and right hand side is in $\textrm{span}\{k_a^t\}$, which are orthogonal, we see that  both sides must equal zero. 
Considering that $c(c' - \lambda)k_a^t = 0$, i.e. $c(c' - \lambda) = 0$, we have $c = 0$ or $(c' - \lambda) = 0$.

If $c = 0$, this means that $g = B_{a,b}h$, which we will show cannot happen. Referring back to the first set of equalities in this proof, we then get that 
\[
    0 = \phi B_{a,b}h - \lambda B_{a,b}h 
      = (\phi - \lambda)B_{a,b}h.
\]
This now says that either $\phi = \lambda$, which is a contradiction, or that $h= 0$. But if $h=0$, then $B_{a,b}h = g = 0$,  which is also a contradiction. 

In turn, it must be that $c' - \lambda = 0$, which implies that $\lambda = c' = \langle \phi k_a^t, k_a^t \rangle/\|k_a^t\|^2$, as claimed. 
\end{proof}

\begin{remark}
We point out that the quantity $\langle \phi k_a^t, k_a^t \rangle/\|k_a^t\|^2$ is known as the Berezin transform of $T_\phi^t$ at $a$. This transform appears naturally in the study of various operator theoretic properties of Toeplitz operators (e.g. see \cite[Section 2]{cowen2021convexity}). 
\end{remark}

We now provide the main result of this section, which completely characterizes the spectrum and relative eigenvalues  of $T^t_\phi$ when the symbol is in our multiplier algebra.

\begin{theorem}\label{mainresult1} If $\phi\in \mathscr{A}_{a,b}$, then
\begin{enumerate}[(i)]
    \item $\sigma(T^t_\phi) = \overline{\phi(\D)}$ and
    \item $\Lambda_\phi^{a,b} = \{ \phi(a)\}$.
\end{enumerate}
In particular, both $\sigma(T^t_\phi)$ and $\Lambda_\phi^{a,b}$ are connected.
\end{theorem}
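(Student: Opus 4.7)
The plan is to handle (i) and (ii) separately, exploiting two elementary but crucial facts: first, each $\phi \in \mathscr{A}_{a,b}$ is a multiplier of $H^2_t$, so $T^t_\phi f = P_t(\phi f) = \phi f$ for every $f \in H^2_t$; second, $\mathscr{A}_{a,b}$ is closed under inversion of nowhere-vanishing elements, since $\phi(a) = \phi(b)$ forces $(1/\phi)(a) = (1/\phi)(b)$ whenever $\phi$ has no zeros in $\D$.

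For (i), I would prove two inclusions. For $\overline{\phi(\D)} \subseteq \sigma(T^t_\phi)$, I would show that for every $w \in \D$, the reproducing kernel $k_w^t$ is an eigenvector of $(T^t_\phi)^*$ with eigenvalue $\overline{\phi(w)}$: the multiplier property gives $\langle T^t_\phi f, k_w^t\rangle = (\phi f)(w) = \phi(w)\langle f, k_w^t\rangle$ for every $f \in H^2_t$, so $(T^t_\phi)^* k_w^t = \overline{\phi(w)}\, k_w^t$. Hence $\phi(w) \in \sigma(T^t_\phi)$, and closing yields the inclusion. For $\sigma(T^t_\phi) \subseteq \overline{\phi(\D)}$, if $\lambda \notin \overline{\phi(\D)}$ then $|\phi - \lambda|$ is uniformly bounded below on $\D$, so $1/(\phi - \lambda) \in H^\infty$; since $(\phi - \lambda)(a) = (\phi - \lambda)(b)$, in fact $1/(\phi - \lambda) \in \mathscr{A}_{a,b}$. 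The multiplier property also yields the composition law $T^t_\phi T^t_\psi = T^t_{\phi\psi}$ for $\phi, \psi \in \mathscr{A}_{a,b}$ (both sides act as pointwise multiplication by $\phi\psi$), whence $T^t_{1/(\phi - \lambda)}$ is a two-sided inverse for $T^t_\phi - \lambda I$.

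For (ii), I would apply Proposition \ref{analyticpointspectrum} and simply compute the Berezin transform that appears there. Since $\phi k_a^t \in H^2_t$ by the multiplier property, the reproducing property gives $\langle \phi k_a^t, k_a^t\rangle = (\phi k_a^t)(a) = \phi(a)\, k_a^t(a) = \phi(a)\|k_a^t\|^2$. Dividing, $\sigma_p(T^t_\phi) = \{\phi(a)\}$ for every $t \in \C\setminus\{0\}$, and the union $\Lambda_\phi^{a,b}$ collapses to the single point $\{\phi(a)\}$. Connectedness in both (i) and (ii) is then automatic, since $\overline{\phi(\D)}$ is the closure of a continuous image of a connected set, and $\{\phi(a)\}$ is a singleton.

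I expect no serious obstacle. The only mild care needed is for the degenerate parameters $t \in \{0, \infty\}$, where one argues directly using $H^2_0 = B_a H^2$ and $H^2_\infty = B_{a,b}H^2$ in accordance with Remark \ref{kernels}: the adjoint eigenvector argument still produces $\phi(w) \in \sigma(T^t_\phi)$ for every $w \in \D$, the inverse constructed from $1/(\phi - \lambda) \in \mathscr{A}_{a,b}$ still maps these subspaces to themselves, and for the point spectrum one verifies by a direct kernel analysis on $B_a H^2$ or $B_{a,b}H^2$ that $\phi(a)$ is again the unique eigenvalue (or that none exists, which is still consistent with the union equaling $\{\phi(a)\}$).
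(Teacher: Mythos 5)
Your proof of (ii) is the paper's proof essentially verbatim: apply Proposition \ref{analyticpointspectrum} and evaluate the Berezin transform via $\phi k_a^t \in H^2_t$ and the reproducing property. For (i) you diverge in a genuinely useful way. On the lower inclusion, the paper observes that every function in the range of $T^t_\phi - \phi(z_0)I$ vanishes at $z_0$, so the operator is not surjective; your computation $(T^t_\phi)^*k_w^t = \overline{\phi(w)}\,k_w^t$ is the dual of the same observation and is equally valid, with the caveat you already flag that $k_a^0 = 0$ and $k_a^\infty = k_b^\infty = 0$, which is absorbed by continuity of $\phi$ since $\overline{\phi(\D)} = \overline{\phi(\D\setminus\{a,b\})}$. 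The real difference is the upper inclusion: the paper cites Proposition \ref{convexhull}, which only places $\sigma(T^t_\phi)$ inside the closed convex hull $\mathcal{R}_e(\phi)$, whereas you construct the inverse explicitly from $1/(\phi-\lambda)\in\mathscr{A}_{a,b}$ (Wintner's classical argument, transported to the constrained setting via the inverse-closedness of $\mathscr{A}_{a,b}$). Your version is self-contained and in fact sharper: when $\phi(\D)$ fails to be convex, $\mathcal{R}_e(\phi)$ can strictly contain $\overline{\phi(\D)}$, so the explicit multiplicative inverse is the cleaner way to land exactly in $\overline{\phi(\D)}$. One shared caveat, inherited from the paper rather than introduced by you: since $T^t_\phi$ acts as multiplication by $\phi$ on $H^2_t$, a nonzero eigenvector would force $\phi\equiv\lambda$, so for non-constant $\phi\in\mathscr{A}_{a,b}$ the proof of Proposition \ref{analyticpointspectrum} really yields the containment $\sigma_p(T^t_\phi)\subseteq\{\langle\phi k_a^t,k_a^t\rangle/\|k_a^t\|^2\}$ rather than equality; both you and the paper read it as equality when concluding $\Lambda_\phi^{a,b}=\{\phi(a)\}$, so this does not distinguish your argument from theirs.
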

\begin{proof}
To see (\ref{main1item1}), note as $\phi\in \mathscr{A}_{a,b}$, we have $T^t_\phi f = \phi f$. Now let $z_0 \in \D$ and suppose that $\lambda=\phi(z_0)$. Observe, for any $f\in H_t^2$, that
\[
\left((T_\phi^t-\lambda I)f\right)(z_0)=(\phi(z_0)-\lambda)f(z_0)=0.
\]
Thus, $T_\phi^t-\lambda$ cannot be surjective, since all functions in its range must have a zero at $z_0$. This shows $\phi(\mathbb{D})\subseteq \sigma(T_\phi^t)$. Since the spectrum is compact and hence closed, we have $\overline{\phi(\mathbb{D})}\subseteq \sigma(T_\phi^t)$. To see the inclusion $\sigma(T^t_\phi) \subseteq \overline{\phi(\mathbb{D})}$, assume that $\lambda \notin \overline{\D}$. In turn, $\operatorname{dist}(\lambda, \overline{\D}) : = \delta > 0$. Since $|\phi(z) - \lambda| \ge \delta$ for all $z\in \D$, we have $1/(\phi(z) - \lambda)$ is analytic and bounded by $1/\delta$ on $\D$. Hence, we have $(T_\phi^t - \lambda I)^{-1} = T_{1/(\phi - \lambda)}$, and so $\lambda \notin \sigma(T_\phi^t)$.
 
To see (\ref{main1item2}), note first that $\mathscr{A}_{a,b}\subseteq H^\infty$ and therefore Proposition  \ref{analyticpointspectrum} guarantees that $\sigma_p(T^t_\phi) = \langle \phi k^t_a, k^t_a\rangle/ \|k^t_a\|^2$. However, since $\phi \in \mathscr{A}_{a,b}$, we have $\phi k_a^t\in H^2_t$, and therefore $\langle \phi k^t_a, k^t_a\rangle = \phi(a)k_a^t(a)$. This gives $\sigma_p(T^t_\phi) = \phi(a)$, which is independent of $t$, so we have $\Lambda_\phi^{a,b} = \{\phi(a)\}$.
\end{proof}

Before we turn to real-valued symbols, we provide a brief discussion of arbitrary symbols to establish that $\sigma(T^t_\phi)$ is contained in the closed convex hull of the essential range of $\phi$, a result true for \textit{any} $\phi\in L^\infty$. This result is directly analogous to the classical result by Brown and Halmos for unconstrained Toeplitz operators (see \cite[Corollary~7.19]{banachtech}). An important part of the proof of the  following result is \cite[Lemma 4.5]{pfefferjury}, which observes that $\|\phi\|_\infty = \|T^t_\phi\|$.

In order to show this spectral inclusion, we first note that the closed convex hull of a set $E\subseteq \C$ is the intersection of all open half-planes that contain $E$ (e.g. see \cite[Lemma 7.17]{banachtech}.) We denote the closed convex hull of the essential range of a function $\phi$ by
\[
\mathcal{R}_e(\phi) := \bigcap \left\{S : \text{essran}(\phi)\subseteq S \ \text{and} \ S \ \text{is a half-plane in} \ \C \right\}.
\]
Note that $\mathcal{R}_e(\phi)$ is necessarily a closed subset of $\C$. Thus, if $\phi$ is bounded, the above set is compact. We now have the following spectral inclusion.

\begin{proposition}\label{convexhull}
If $\phi\in L^\infty$, then $\sigma(T^t_\phi)$ is contained in $\mathcal{R}_e(\phi)$. 
\end{proposition}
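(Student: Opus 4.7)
The plan is to argue by contraposition: if $\lambda \notin \mathcal{R}_e(\phi)$, then $T^t_\phi - \lambda I$ is invertible. The key structural observation is that for a bounded set $E \subseteq \C$, the intersection of all closed half-planes containing $E$ coincides with the intersection of all closed disks containing $E$. So my first step is to deduce from $\lambda \notin \mathcal{R}_e(\phi)$ the existence of $c \in \C$ and $r > 0$ with $\operatorname{essran}(\phi) \subseteq \overline{D(c,r)}$ while $|\lambda - c| > r$.

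For the half-plane/disk equivalence, given a closed half-plane $S$ separating $\lambda$ from $\operatorname{essran}(\phi)$, after a rigid motion I may assume $S = \{\operatorname{Re}(z) \leq 0\}$, that $\operatorname{essran}(\phi) \subseteq \{|z| \leq K\}$ for some $K > 0$, and that $\operatorname{Re}(\lambda) > 0$. A short computation then shows that for $R$ sufficiently large, the disk $\overline{D(-R,\sqrt{R^2+K^2})}$ contains $\operatorname{essran}(\phi)$ (since $(x+R)^2 + y^2 \leq R^2+K^2$ whenever $x \leq 0$ and $x^2 + y^2 \leq K^2$) but excludes $\lambda$ (once $R > (K^2 - |\lambda|^2)/(2\operatorname{Re}\lambda)$).

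With such a disk fixed, the rest is a standard Neumann-series argument. From $|\phi - c| \leq r$ almost everywhere we obtain $\|\phi - c\|_\infty \leq r$. Linearity of the Toeplitz construction gives $T^t_\phi - cI = T^t_{\phi - c}$, and because $T^t_\psi$ is the compression to $H^2_t$ of multiplication by $\psi$, its operator norm is bounded by $\|\psi\|_\infty$ (as sharpened in \cite[Lemma~4.5]{pfefferjury}). Hence $\|T^t_\phi - cI\| \leq r < |\lambda - c|$, so
\[
T^t_\phi - \lambda I = -(\lambda - c)\left(I - \frac{1}{\lambda - c}(T^t_\phi - cI)\right)
\]
is invertible by the Neumann series, giving $\lambda \notin \sigma(T^t_\phi)$.

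The only nontrivial step is the geometric lemma recasting the closed convex hull in terms of disks; after that, the spectral inclusion is the same operator-theoretic boilerplate as in the classical $H^2$ setting, relying only on the compression nature of $T^t_\phi$ together with the bound $\|T^t_\phi\| \leq \|\phi\|_\infty$.
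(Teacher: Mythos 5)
Your argument is correct and is essentially the paper's: both proofs reduce to the norm identity $\|T^t_\psi\| = \|\psi\|_\infty$ from \cite[Lemma~4.5]{pfefferjury} followed by a Neumann-series invertibility argument after an affine change of the symbol, the only cosmetic difference being that you package the separating half-plane as a large disk $\overline{D(c,r)}$ excluding $\lambda$, whereas the paper translates by $\tau$ and rescales by $\varepsilon$ to get $\|I - T^t_{\varepsilon(\phi-\tau)}\| < 1$ (which implicitly encodes the same disk). Just make sure the separation is strict (essran$(\phi)$ is compact, so it sits at positive distance from the boundary line, letting you take $\operatorname{Re}(\lambda)>0$ after the rigid motion); with that, the proposal matches the paper's proof.
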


\begin{proof}
We show that if $\tau\notin \mathcal{R}_e(\phi)$, then $\tau\notin \sigma(T^t_\phi)$. Accordingly, suppose $\tau \in \C \setminus \mathcal{R}_e(\phi)$. Then $\tau \not\in S$ for some open half-plane $S\subseteq \C$ containing $\text{essran}(\phi)$. After translation and rotation, we can assume $S$ is the right half-plane $\{z\in \C : \text{Re}(z)>0\}$. Thus, since $\phi-\tau$ is bounded, we have that $\mathcal{R}_e(\phi-\tau)$ is a compact subset of $S$. Hence, there exists $\varepsilon>0$ such that
\[
\varepsilon\mathcal{R}_e(\phi-\tau) \subseteq \{z\in \C : |z-1|<1\},
\]
and therefore $\|1-\varepsilon (\phi-\tau) \|_\infty < 1$. It is known that $\|1-\varepsilon (\phi-\tau) \|_\infty = \|I - T^t_{\varepsilon(\phi-\tau)} \|$ (see \cite[Lemma~4.5]{pfefferjury}), which gives $\|I - T^t_{\varepsilon(\phi-\tau)} \| <1$. This implies that $T^t_{\varepsilon(\phi-\tau)} = \varepsilon T^t_{\phi-\tau}$ is invertible. Therefore $\tau\notin \sigma(T^t_\phi)$.
\end{proof}

We now turn to results regarding real-valued symbols. Without surprise, this setting poses a greater challenge than the analytic setting.

\section{Toeplitz operators with real-valued symbols} 
 
In this section, we provide a proof of Theorem \ref{mainresult2}. We begin by noting that, for $\phi\in L^\infty$ real-valued, the space $\ker(T^t_\phi)$ is naturally associated with the annihilator of $\mathscr{A}_{a,b} + \mathscr{A}_{a,b}^* :=  \{h_1+\overline{h_2}: h_1,h_2\in \mathscr{A}_{a,b}\}$. In order to establish this fact, we first record a few propositions.

\begin{proposition}\label{getrektd}
Let $\phi \in L^\infty$ be real-valued. If $g \in \ker(T^t_\phi)$, then $\phi |g|^2$ annihilates $\mathscr{A}_{a,b} + \mathscr{A}_{a,b}^*$. 
\end{proposition}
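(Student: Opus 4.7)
The plan is to unpack the statement and reduce it to a single orthogonality relation that follows directly from the multiplier property of $\mathscr{A}_{a,b}$ on $H^2_t$. Since $\mathscr{A}_{a,b}\oplus \mathscr{A}_{a,b}^*$ is spanned by elements of the form $h_1+\overline{h_2}$ with $h_1,h_2\in\mathscr{A}_{a,b}$, the claim that $\phi|g|^2$ annihilates this space amounts to showing
\[
\int_\T \phi|g|^2 h\, d\mu = 0 \quad \text{and} \quad \int_\T \phi|g|^2 \overline{h}\, d\mu = 0
\]
for every $h\in\mathscr{A}_{a,b}$. Because $\phi$ is real-valued and $|g|^2\geq 0$, the weight $\phi|g|^2$ is real-valued on $\T$, so one of these integrals being zero implies the other by complex conjugation. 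It therefore suffices to verify a single orthogonality statement.

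First, I would observe that $\mathscr{A}_{a,b}$ acts as a multiplier algebra on each $H^2_t$: if $h\in \mathscr{A}_{a,b}$ and $g\in H^2_t$, then $hg\in H^2$ (since $h$ is bounded), and evaluating,
\[
(hg)(a) = h(a)g(a) = h(b)\,tg(b) = t(hg)(b),
\]
so $hg\in H^2_t$. This is the only structural ingredient needed. (The paper noted this already in the introduction, so it can be quoted rather than reproved.)

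Next, I would use the hypothesis $g\in\ker(T^t_\phi)$, which says $P_t(\phi g)=0$, equivalently $\phi g\perp H^2_t$ in $L^2(\T)$. Applied to the element $hg\in H^2_t$ supplied by the previous step, this gives
\[
0 = \langle \phi g, hg\rangle = \int_\T \phi g\,\overline{hg}\,d\mu = \int_\T \phi|g|^2\,\overline{h}\,d\mu.
\]
Conjugating (using that $\phi|g|^2$ is real on $\T$) yields $\int_\T \phi|g|^2 h\, d\mu = 0$ as well. Extending by linearity in $(h_1,h_2)$ completes the proof.

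The argument is essentially a one-line consequence of the multiplier property $\mathscr{A}_{a,b}\cdot H^2_t\subseteq H^2_t$, so there is no real obstacle; the only thing to be careful about is to invoke that $\phi$ is real-valued at the moment one passes between $h$ and $\overline{h}$.
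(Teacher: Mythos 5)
Your proof is correct and follows essentially the same route as the paper: test $\phi g\perp H^2_t$ against $hg\in H^2_t$ using the multiplier property, then conjugate using that $\phi|g|^2$ is real-valued. The only difference is that you spell out the verification that $hg\in H^2_t$, which the paper takes as known.
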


\begin{proof} Suppose $T^t_\phi g=0$ and let $h\in \mathscr{A}_{a,b}$. Then $hg\in H^2_t$ and we have
\[
0 = \langle T^t_\phi g, hg \rangle 
    = \int_\T \phi g \overline{hg}\,d\mu 
    = \int_\T \phi |g|^2 \overline{h}\,d\mu.
\]
As $\phi|g|^2$ is real-valued, we also have $\int_{\T} \phi |g|^2 h \, d\mu = 0$, which establishes the claim.
\end{proof}

We now characterize annihilating measures for $\mathscr{A}_{a,b} + \mathscr{A}_{a,b}^*$, considered as a subspace of $L^\infty$.

\begin{proposition} \label{annihilatorform}
Let $\nu << \mu$ be a measure whose Radon-Nikodym derivative with respect to $\mu$ is in $L^1$.
Then $\frac{d\nu}{d\mu}$ annihilates $\mathscr{A}_{a,b} + \mathscr{A}_{a,b}^*$ if and only if there exists $d_1,d_2 \in \C$ such that 
\[
\frac{d\nu}{d\mu} = d_1(k_a - k_b) + \overline{d_2(k_a - k_b)} \ \ a.e. \ on \ \T.
\]
\end{proposition}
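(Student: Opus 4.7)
The plan is to combine the classical description of the $L^1$-annihilator of $H^\infty$ with reproducing-kernel identities, and then extract the claimed form by matching Fourier coefficients. For the sufficiency direction, I would compute directly: for any $h \in H^\infty$, the product $(k_a - k_b) h$ lies in $H^1$, and its integral over $\T$ equals $((k_a - k_b)h)(0) = 0$ since $k_a(0) = k_b(0) = 1$. The $H^2$-reproducing property gives $\int_\T (k_a - k_b)\overline{h}\, d\mu = \overline{h(a) - h(b)}$, which vanishes when $h \in \mathscr{A}_{a,b}$. Conjugating these two identities handles the terms involving $\overline{k_a - k_b}$, and expanding $\int_\T \big[d_1(k_a - k_b) + \overline{d_2(k_a - k_b)}\big](h_1 + \overline{h_2})\, d\mu$ with $h_1, h_2 \in \mathscr{A}_{a,b}$ then shows all four cross-terms vanish.

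For necessity, suppose $w \in L^1$ annihilates $\mathscr{A}_{a,b} \oplus \mathscr{A}_{a,b}^*$. Since $a \neq b$, the functional $\ell(h) := h(a) - h(b)$ on $H^\infty$ is nontrivial with kernel $\mathscr{A}_{a,b}$, so $\mathscr{A}_{a,b}$ has codimension one in $H^\infty$. Consequently $h \mapsto \int_\T w h\, d\mu$ vanishes on $\mathscr{A}_{a,b}$ and must equal $c\,\ell$ for some $c \in \C$. Writing $\ell(h) = \int_\T h(\overline{k_a} - \overline{k_b})\, d\mu$ via the reproducing property gives $\int_\T \big[w - c(\overline{k_a} - \overline{k_b})\big] h\, d\mu = 0$ for every $h \in H^\infty$. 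By the classical theorem that the $L^1$-annihilator of $H^\infty$ equals $zH^1$, I conclude $w = f + c(\overline{k_a} - \overline{k_b})$ for some $f \in zH^1$. Applying the same argument to $\overline{w}$ (which annihilates $\mathscr{A}_{a,b}$ because $w$ annihilates $\mathscr{A}_{a,b}^*$) yields a second decomposition $w = \overline{g} + \widetilde{c}(k_a - k_b)$ with $g \in zH^1$.

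To finish, I match Fourier coefficients. The pieces $f$ and $\widetilde{c}(k_a - k_b)$ have Fourier coefficients supported in $\{n \geq 1\}$ (using $(k_a - k_b)(0) = 0$), while $c(\overline{k_a} - \overline{k_b})$ and $\overline{g}$ are supported in $\{n \leq -1\}$. Equating the two decompositions of $w$ and using uniqueness of Fourier coefficients in $L^1$ forces $f = \widetilde{c}(k_a - k_b)$ and $c(\overline{k_a} - \overline{k_b}) = \overline{g}$, so $w = \widetilde{c}(k_a - k_b) + c(\overline{k_a} - \overline{k_b})$, which has the required form with $d_1 = \widetilde{c}$ and $d_2 = \overline{c}$. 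The main obstacle I anticipate is executing the duality step cleanly: identifying the functional induced by $w$ with $c\ell$ and invoking the $L^1$-annihilator description of $H^\infty$ in harmony with the $L^1$--$L^\infty$ pairing.
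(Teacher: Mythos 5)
Your proof is correct, and while your sufficiency argument is essentially the paper's (the same four-term expansion, with two terms killed because $(k_a-k_b)h$ lies in $zH^1$ and two killed by the reproducing property together with $h(a)=h(b)$), your necessity argument takes a genuinely different route. The paper tests $\frac{d\nu}{d\mu}$ against $p_n(z)=z^n(z-a)(z-b)$ and $\overline{p_n}$, derives two-term linear recurrences for the positive and negative Fourier coefficients, solves them explicitly as $c_1a^n+c_2b^n$ (and conjugates), and then pins down the relations $c_2=-c_1$, $c_4=-c_3$ with two further test integrations. You instead observe that $\mathscr{A}_{a,b}=\ker\ell$ with $\ell(h)=h(a)-h(b)$ has codimension one in $H^\infty$, so the functional $h\mapsto\int_\T wh\,d\mu$ is a scalar multiple of $\ell$; writing $\ell$ via the Szeg\H{o} kernels and invoking the classical identification of the annihilator of $H^\infty$ in $L^1$ with $zH^1$ yields $w=f+c\,\overline{(k_a-k_b)}$ with $f\in zH^1$, and the symmetric decomposition for $\overline{w}$ plus uniqueness of Fourier coefficients forces $f=\widetilde{c}(k_a-k_b)$. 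Both arguments are sound; the paper's is more elementary and self-contained (though its recurrence solution $c_1a^n+c_2b^n$ tacitly uses $a\neq b$), while yours is more structural, makes the role of the codimension-one constraint transparent, and would adapt with little change to other finite-codimension subalgebras, at the cost of importing the Fourier-coefficient characterization of $H^1$ inside $L^1$.
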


\begin{proof}
We begin with the backward implication. Assume that there exists $d_1, d_2\in \C$ such that $ \textstyle 
\frac{d\nu}{d\mu} = d_1(k_a - k_b) + \overline{d_2(k_a - k_b)}$ and let $h_1+\overline{h_2} \in \mathscr{A}_{a,b} + \mathscr{A}_{a,b}^*$. It follows that
\begin{align*}
\int_\T (h_1+\overline{h_2})\frac{d\nu}{d\mu} \mathop{d\mu} &= \int_\T (h_1+\overline{h_2})(d_1(k_a - k_b) + \overline{d_2(k_a - k_b)}) \mathop{d\mu}\\
&= d_1\int_\T h_1(k_a - k_b) \mathop{d\mu} + \overline{d_2}\int_\T h_1\overline{k_a - k_b}\mathop{d\mu} \\
& \ \ \  + d_1\int_\T \overline{h_2}(k_a - k_b) \mathop{d\mu}+\overline{d_2}\int_\T \overline{h_2}\overline{k_a - k_b} \mathop{d\mu}.
\end{align*}
The first and fourth terms are both zero since $(k_a-k_b)(0)=0$ and the integrands are entirely analytic and anti-analytic, respectively. Further, since $h_1$ and $h_2$ are both taken from $\mathscr{A}_{a,b}$, we have
\begin{align*}
\int_\T (h_1+\overline{h_2})\frac{d\nu}{d\mu} \mathop{d\mu} &=  \overline{d_2}\int_\T h_1\overline{k_a - k_b} \mathop{d\mu} + d_1\int_\T \overline{h_2}(k_a - k_b) \mathop{d\mu} \\
 &=\overline{d_2}\langle h_1,k_a - k_b \rangle + d_1 \overline{\langle h_2, k_a - k_b\rangle } \\
 &=0.
\end{align*}
Thus, $\textstyle \frac{d\nu}{d\mu}$ annihilates $\mathscr{A}_{a,b} + \mathscr{A}_{a,b}^*$.

For the forward implication, assume $\textstyle \frac{d\nu}{d\mu}$ annihilates $\mathscr{A}_{a,b}+ \mathscr{A}_{a,b}^*$ and let $p_n(z) = z^n(z - a)(z - b)$ for $n\geq 0$. Since $p_n$ vanishes at $a$ and $b$, we have $p_n, \overline{p_n} \in \mathscr{A}_{a,b} + \mathscr{A}_{a,b}^*$. Hence,
\[
0 = \int_\T p_n \frac{d\nu}{d\mu} d\mu
= \int_\T z^{n+2}\frac{d\nu}{d\mu} d\mu - \int_\T (a + b)z^{n+1}\frac{d\nu}{d\mu} d\mu + \int_\T abz^n \frac{d\nu}{d\mu} d\mu.
\]
This gives that the negative Fourier coefficients of $\frac{d\nu}{d\mu}$ must satisfy the linear recurrence relation
\[
\widehat{\frac{d\nu}{d\mu}}(n+2) = (a+b)\widehat{\frac{d\nu}{d\mu}}(n+1) - ab\widehat{\frac{d\nu}{d\mu}}(n).
\]
One can readily verify that this recurrence relation is solved by $\widehat{\frac{d\nu}{d\mu}}(n) =c_1 a^n + c_2 b^n$, for some constants $c_1, c_2 \in \C$.  Similarly, considering 
$0 = \int_\T \overline{p_n} \frac{d\nu}{d\mu} d\mu$, we see that the positive Fourier coefficients of $\frac{d\nu}{d\mu}$ must satisfy the recurrence relation 
\[
\widehat{\frac{d\nu}{d\mu}}(n-2) = (\overline{a+b})\widehat{\frac{d\nu}{d\mu}}(n-1) - \overline{ab}\widehat{\frac{d\nu}{d\mu}}(n).
\]
This recurrence relation is solved by $\widehat{\frac{d\nu}{d\mu}}(n) =c_3 \overline{a}^n + c_4 \overline{b}^n$, for some constants $c_3, c_4 \in \C$.
Noting that $\widehat{\frac{d\nu}{d\mu}}(0)=0$, we find that, almost everywhere on $\T$, we have
\begin{align*}
    \frac{d\nu}{d\mu}(z) &= \sum_{n<0} \widehat{\frac{d\nu}{d\mu}}(n) z^n + \sum_{n>0} \widehat{\frac{d\nu}{d\mu}}(n) z^n \\
    &= \sum_{n<0} (c_1 a^n + c_2 b^n) \overline{z}^n + \sum_{n>0} (c_3 \overline{a}^n + c_4 \overline{b}^n )z^n \\
    &= c_1\sum_{n\le0} (a\overline{z})^n - c_1 + c_2\sum_{n\le 0} (b\overline{z})^n - c_2 \\
    &\ \ \ + c_3\sum_{n\ge0} (\overline{a}z)^n - c_3 + c_4\sum_{n\ge0} (\overline{b}z)^n - c_4 \\
    &= c_1 \overline{k_a(z)} - c_1 + c_2 \overline{k_b(z)} - c_2 + c_3k_a(z) - c_3 + c_4k_b(z) - c_4.
\end{align*}
For any $h \in \mathscr{A}_{a,b}$ with $h(0)=0$, we now have
\[ 
0 = \int_\T h \frac{d\nu}{d\mu} d\mu 
= c_3h(a) + c_4h(b).
\]
Since $h(a) = h(b)$, we have $c_4 = -c_3$. Similarly, for $\overline{h}\in \mathscr{A}_{a,b}^*$ with $h(0)=0$, we have
\[ 
0 = \int_\T \overline{h} \frac{d\nu}{d\mu} d\mu 
= c_1\overline{h(a)} + c_2\overline{h(b)}.
\]
Again, since $\overline{h(a)} = \overline{h(b)}$, we have $c_2 = -c_1$. 
Letting $d_1 = c_3$ and $d_2=c_1$, we have 
\[
\frac{d\nu}{d\mu} = d_1(k_a - k_b) + \overline{d_2 ( k_a - k_b)}. \]
\end{proof}

The upshot now is that, by passing to the outer factor $G$ of $g \in \ker(T_\phi^t)$, we have an explicit description of $\phi|G|^2$. First, however, we need the following lemma.

\begin{lemma} \label{density}
If $G\in H^2_t$ is outer, then $G\mathscr{A}_{a,b}$ is dense in $H^2_t$.
\end{lemma}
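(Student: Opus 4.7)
The plan is to exploit the decomposition $H^2_t = \mathbb{C} k_a^t \oplus B_{a,b} H^2$ from Proposition \ref{onbasis} together with a parallel decomposition of the two-point algebra itself. Following Remark \ref{kernels}, I would restrict to $t \in \mathbb{C} \setminus \{0\}$; the cases $t = 0$ and $t = \infty$ are vacuous because every nonzero function in $H^2_0$ or $H^2_\infty$ vanishes at $a$, ruling out any outer $G$ in these spaces.

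The first step is to observe that $\mathscr{A}_{a,b} = \mathbb{C} \cdot 1 + B_{a,b} H^\infty$. Indeed, for $h \in \mathscr{A}_{a,b}$, the function $h - h(a)$ lies in $H^\infty$ and vanishes at both $a$ and $b$, so $(h - h(a))/B_{a,b} \in H^\infty$. Multiplying through by $G$ yields
\[
G \mathscr{A}_{a,b} = \mathbb{C} G + B_{a,b}(G H^\infty).
\]

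Next, I would bring in the classical fact that $G H^\infty$ is dense in $H^2$ whenever $G$ is outer (outer functions are cyclic for the shift). Since $B_{a,b}$ is inner, multiplication by $B_{a,b}$ is an $H^2$-isometry, so $B_{a,b}(G H^\infty)$ is dense in $B_{a,b} H^2$. Consequently, the closure $\overline{G \mathscr{A}_{a,b}}$ contains $\mathbb{C} G + B_{a,b} H^2$. To finish, I would decompose $G = \alpha k_a^t + B_{a,b} h$ via Proposition \ref{onbasis}(\ref{onbasis2}); here $\alpha \|k_a^t\|^2 = \langle G, k_a^t\rangle = G(a)$, which is nonzero since $G$ is outer. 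Solving for $k_a^t$ shows $k_a^t \in \mathbb{C} G + B_{a,b} H^2$, so
\[
\overline{G \mathscr{A}_{a,b}} \supseteq \mathbb{C} G + B_{a,b} H^2 = \mathbb{C} k_a^t + B_{a,b} H^2 = H^2_t.
\]

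There is no serious obstacle; the argument is a clean assembly of structure already in hand. The key conceptual point is that both $\mathscr{A}_{a,b}$ and $H^2_t$ split over the corresponding ``ideal part'' ($B_{a,b}H^\infty$ and $B_{a,b}H^2$ respectively), leaving a one-dimensional quotient on each side where the nonvanishing of the outer function $G$ at $a$ supplies the scalar needed to identify the two quotients.
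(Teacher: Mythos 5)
Your proof is correct, and it takes a genuinely different route from the paper's. Both arguments ultimately rest on the same classical fact --- an outer function $G$ has $GH^\infty$ (equivalently, its polynomial multiples) dense in $H^2$ --- but they deploy it differently. The paper approximates an arbitrary $f\in H^2_t$ directly by polynomial multiples $p_nG$ and then repairs each $p_n$ by subtracting its projection onto $\operatorname{span}\{k_a-k_b\}$, so that the corrected multipliers lie in $\mathscr{A}_{a,b}$; showing the correction vanishes requires a pointwise-convergence argument at $a$ and $b$ together with the constraints $f(a)=tf(b)$ and $G(a)=tG(b)$, $G(a)\neq 0$. You instead split the algebra as $\mathscr{A}_{a,b}=\C\cdot 1+B_{a,b}H^\infty$ in parallel with the splitting $H^2_t=\C k_a^t\oplus B_{a,b}H^2$ of Proposition \ref{onbasis}, reduce the problem to the unconstrained density of $GH^\infty$ in $H^2$ on the ideal part, and use $G(a)=\alpha\|k_a^t\|^2\neq 0$ to recover the one-dimensional complement; your handling of $t=0,\infty$ (vacuous, since those spaces contain no outer functions) is also correct. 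Your version is arguably more transparent about \emph{why} the lemma holds --- the constraint only costs one dimension, and the nonvanishing of $G$ at $a$ pays for it --- while the paper's version has the minor virtue of exhibiting an explicit approximating sequence $q_nG\to f$. Either proof would serve.
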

\begin{proof}
Suppose $f,G\in H^2_t$ with $G$ outer. Since $G$ is outer, there exists a sequence of polynomials $\{p_n\}$ such that $p_nG\rightarrow f$. Let $P_{a,b}:H^2 \rightarrow (H^2_{t=1})^\perp$ be the projection onto the $\text{span}\{k_a-k_b\}$ and define 
\[
q_n :=p_n - P_{a,b}(p_n)=p_n -\ip{p_n}{\frac{k_a-k_b}{\|k_a-k_b\|}}\frac{k_a-k_b}{\|k_a-k_b\|}.
\]
We claim that $q_n G \to f$. Observe that $q_n \in H^2_{t = 1}$, and since $q_n$ is a linear combination of $H^\infty$ functions, we have that $q_n\in \mathscr{A}_{a,b}$. Observe that $q_nG \to f$ when
\[
\ip{p_n}{\frac{k_a-k_b}{\|k_a-k_b\|}}\frac{k_a-k_b}{\|k_a-k_b\|} = (p_n(a) - p_n(b))\frac{k_a - k_b}{\|k_a - k_b\|^2} \rightarrow 0,
\]
which happens if and only if $(p_n(a)- p_n(b))\rightarrow 0$. Since $p_nG\rightarrow f$ in norm, it also converges pointwise. Therefore, 
\[
p_n(a)G(a)\rightarrow f(a)\quad \text{ and }\quad p_n(b)G(b)\rightarrow f(b).
\]
Since $G$ and $f$ are in $H^2_t$ and $G$ is outer (therefore $G(a)\neq 0)$, we have
\begin{align*}
    \lim_{n\rightarrow \infty} (p_n(a)-p_n(b))&=\lim_{n\rightarrow \infty}\frac{1}{G(a)}\left(p_n(a)G(a)-tG(b)p_n(b)\right)\\
    &=\frac{1}{G(a)}\left(f(a)-tf(b)\right) = 0.
\end{align*}
\end{proof}

We now prove the aforementioned identification between $\ker(T^t_\phi)$ and the annihilator measures of $\mathscr{A}_{a,b} + \mathscr{A}_{a,b}^*$.

\begin{proposition} \label{outerimplieseigen}
Let $\phi \in L^\infty$ be real-valued and suppose $G \in H^2_t$ is outer. Then $G \in \ker(T^t_\phi)$ if and only if there exists a constant $c \in \C$ so that 
\[
\phi|G|^2 = c(k_a - k_b) + \overline{c(k_a - k_b)}. 
\]
\end{proposition}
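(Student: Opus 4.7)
The plan is to prove the two implications by leaning on the annihilator description developed in Propositions \ref{getrektd} and \ref{annihilatorform}, using Lemma \ref{density} only for the backward direction.

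For the forward direction, suppose $G\in\ker(T^t_\phi)$. Proposition \ref{getrektd} gives that $\phi|G|^2$ annihilates $\mathscr{A}_{a,b}\oplus \mathscr{A}_{a,b}^*$. Since $\phi\in L^\infty$ and $G\in H^2\subseteq L^2$, we have $\phi|G|^2 \in L^1$, so the measure $d\nu=\phi|G|^2\,d\mu$ satisfies the hypotheses of Proposition \ref{annihilatorform}. That proposition yields constants $d_1,d_2\in\C$ with
\[
\phi|G|^2 = d_1(k_a-k_b) + \overline{d_2(k_a-k_b)} \qquad \text{a.e.\ on }\T.
\]
Since the left side is real-valued, taking complex conjugates of both sides gives an identical expression with $d_1$ and $d_2$ swapped. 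Averaging the two representations and setting $c=(d_1+d_2)/2$ yields
\[
\phi|G|^2 = c(k_a-k_b) + \overline{c(k_a-k_b)},
\]
which is the desired form.

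For the backward direction, assume $\phi|G|^2$ has the above form. To show $G\in\ker(T^t_\phi)$, it suffices to verify that $\phi G$ is orthogonal (in $L^2$) to every element of $H^2_t$. By Lemma \ref{density}, the set $G\mathscr{A}_{a,b}$ is dense in $H^2_t$, so it is enough to check this for vectors $f=Gh$ with $h\in\mathscr{A}_{a,b}$. For such $h$,
\[
\ip{\phi G}{Gh} = \int_\T \phi |G|^2 \overline{h}\,d\mu.
\]
Since $\overline{h}\in\mathscr{A}_{a,b}^*$, and $\phi|G|^2$ is of the prescribed form, the backward direction of Proposition \ref{annihilatorform} (applied with $d_1=d_2=c$) shows that this integral vanishes. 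Hence $P_t(\phi G)=0$, which is exactly $G\in\ker(T^t_\phi)$.

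The computations are essentially packaging earlier propositions, so there is no deep obstruction; the only subtlety I anticipate is the symmetrization step in the forward direction, where one must be careful that Proposition \ref{annihilatorform} produces two a priori independent constants $d_1,d_2$, and it is the reality of $\phi|G|^2$ (rather than any uniqueness of the representation) that collapses them into the single constant $c$.
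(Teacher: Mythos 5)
Your proof is correct and follows essentially the same route as the paper: Proposition \ref{getrektd} plus Proposition \ref{annihilatorform} for the forward direction, and Lemma \ref{density} with the integral computation for the backward direction. Your averaging step $c=(d_1+d_2)/2$ is a slightly more careful way of collapsing the two constants than the paper's direct assertion that $d_1=d_2$, but both rest on the reality of $\phi|G|^2$ and amount to the same thing.
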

\begin{proof}
We prove the backward direction first. Suppose $G\in H^2_t$ is outer and let $h\in \mathscr{A}_{a,b}$. Observe that
\[
\langle T^t_\phi G, hG \rangle = \int_\T \phi |G|^2 \overline{h} \, d\mu = \int_\T (c(k_a-k_b) + \overline{c(k_a - k_b})) \overline{h} \, d\mu = 0.
\]
In view of Lemma \ref{density}, $T^t_\phi G \equiv 0$.

To see the forward direction, observe that if $T^t_\phi G =0$, then it follows from Proposition \ref{getrektd} that $\phi|G|^2$ annihilates $\mathscr{A}_{a,b}+ \mathscr{A}_{a,b}^*$. Further, since $G\in L^2$ and $\phi \in L^\infty$, we have that $\phi|G|^2\in L^1$. Thus, the measure $\phi |G|^2 \mathop{d\mu}$ is a measure that is absolutely continuous with respect to $\mu$ and whose Radon-Nikodym derivative with respect to $\mu$ is in $L^1$. It now follows from Proposition \ref{annihilatorform} that there exists $d_1,d_2 \in \C$ such that
\[
\phi|G|^2 = d_1(k_a - k_b) + \overline{d_2(k_a - k_b)}.
\]
However, since $\phi|G|^2$ is real-valued, this is only possible if $d_1=d_2$. Putting $c$ as this common value, the result follows.
\end{proof}

\begin{remark}
In the backward direction of Proposition \ref{outerimplieseigen}, we require $G$ to be outer in order to use Lemma \ref{density}. However, the forward direction does not require $G$ to be outer. 
\end{remark}

We will now have an interlude to discuss the behavior of $c(k_a-k_b) + \overline{c(k_a-k_b)}$ on $\T$. 

\begin{lemma}\label{realequality}
For any choice of $a,b \in \D$ and $c \in \C\setminus\{0\}$, the function 
\[
\text{Re}\left
(c(k_a(e^{it}) - k_b(e^{it}))\right)
\]
is positive (negative) on precisely one proper sub-arc of $\T$. 
\end{lemma}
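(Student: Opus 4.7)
Set $\psi(t) := c(k_a(e^{it}) - k_b(e^{it}))$ and
\[
g(z) := c(k_a(z) - k_b(z)) = \frac{c(\bar{a}-\bar{b})\,z}{(1-\bar{a}z)(1-\bar{b}z)},
\]
so $\psi(t) = g(e^{it})$. Under the standing hypotheses $a \neq b$ and $c \neq 0$, the rational function $g$ is holomorphic on $\overline{\D}$ with a single simple zero at $z=0$, and it has no zeros on $\T$. My plan is to show that $\operatorname{Re}\psi$ vanishes at exactly two points of $\T$; the lemma then follows, because $\operatorname{Re}\psi$ must have constant sign on each complementary open arc, and both of these arcs are proper.

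The approach is to show that $\arg\psi(t)$ is strictly monotone and makes one full revolution as $t$ runs over $[0,2\pi)$. The winding-number-one count comes from the argument principle applied to $g$ on $\D$. For the monotonicity, I would compute by logarithmic differentiation
\[
\frac{zg'(z)}{g(z)} \;=\; 1 + \frac{\bar{a}z}{1-\bar{a}z} + \frac{\bar{b}z}{1-\bar{b}z} \;=\; k_a(z) + k_b(z) - 1,
\]
and then deduce, using $\psi'(t) = ie^{it}g'(e^{it})$, that
\[
\frac{d}{dt}\arg\psi(t) \;=\; \operatorname{Im}\!\left(\frac{\psi'(t)}{\psi(t)}\right) \;=\; \operatorname{Re}\!\left(k_a(e^{it}) + k_b(e^{it})\right) - 1.
\]

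The key pointwise estimate is then the bound $\operatorname{Re}(k_w(e^{it})) > \tfrac{1}{2}$ for every $w\in\D$; this follows directly from the identity $\operatorname{Re}(k_w(e^{it})) = (1-\operatorname{Re}(we^{-it}))/|1-\bar{w}e^{it}|^2$ and reduces to the elementary inequality $|w|^2 < 1$. Applied with $w=a$ and $w=b$, this forces $\frac{d}{dt}\arg\psi(t) > 0$ for all $t$. Combined with the winding-one count, $\arg\psi$ is a strictly increasing continuous bijection from $\T$ onto some interval of length $2\pi$, so the equations $\arg\psi = \pi/2$ and $\arg\psi = 3\pi/2$ each have a unique solution in $\T$. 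These are the only zeros of $\operatorname{Re}\psi$, and $\operatorname{Re}\psi$ therefore takes opposite constant signs on the two complementary open arcs, each of which is proper. The one step that requires any real calculation is the Poisson-type bound $\operatorname{Re}(k_w) > 1/2$ on $\T$; everything else is bookkeeping around the argument principle and logarithmic differentiation.
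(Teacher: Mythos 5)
Your argument is correct, but it takes a genuinely different route from the paper. You establish that the argument of $\psi(t) = cu(e^{it})$ (where $u = k_a - k_b$) is strictly increasing, via the identity $\tfrac{d}{dt}\arg\psi = \operatorname{Re}(k_a(e^{it})) + \operatorname{Re}(k_b(e^{it})) - 1$ and the Poisson-type bound $\operatorname{Re}(k_w) > \tfrac12$ on $\T$ (both of which I checked and which are correct), and you pair this with the winding-number count $1$ coming from the simple zero of $cu$ at the origin. The paper instead combines two separate inputs: the open mapping theorem, to see that $0$ lies in the interior of $cu(\D)$ and hence that $cu(\T)$ meets both open half-planes, and an explicit computation showing that $\operatorname{Re}(cu(e^{it})) = 0$ reduces to a degree-one trigonometric equation with at most two roots. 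Your approach is slightly longer to set up but delivers a stronger structural fact: the curve $t \mapsto cu(e^{it})$ traverses every ray from the origin exactly once, so the ``exactly one positive arc and one negative arc'' conclusion holds simultaneously for every rotation of $c$, and the sign change at each of the two zeros is automatic from the transversal crossing of $\arg\psi = \pm\pi/2$. (This uniformity is essentially what the paper later needs in Proposition \ref{no_contain}, so your lemma-level argument would streamline that proof as well.) Two small bookkeeping points you should make explicit in a final write-up: the standing hypothesis $a \neq b$ is what guarantees $cu \not\equiv 0$ and that $cu$ has no zeros on $\T$; and the phrase ``bijection onto an interval of length $2\pi$'' should be phrased in terms of a continuous branch of the argument on $[0,2\pi]$, with the equations $\arg\psi \equiv \pm\pi/2$ read modulo $2\pi$. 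Neither affects the validity of the proof.
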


\begin{proof}
Begin by letting $u(z) :=  k_a(z) - k_b(z)$ and noting
\[
u(z) = \frac{\overline{(a - b)}z}{(1 - \overline{a}z)(1 - \overline{b}z)}.
\]
In turn, we see that $cu$ fixes the origin for any value of $c \in \C$ and is analytic on a disk containing $\overline{\D}$. By the open mapping theorem, we have $0$ is in the interior of $cu(\D)$. Further, and again by the open mapping theorem, we have that the boundary of $cu(\D)$ is contained in $cu(\T)$. But since $0$ is in the interior of $cu(\D)$, it must be that the boundary of $cu(\D)$ has a component both in the left-half and right-half plane, and so must $cu(\T)$. 
Also notice that 
\begin{align*}
\text{Re}(cu(z)) &= \frac12\left[\frac{c\overline{(a - b)}z}{(1 - \overline{a}z)(1 - \overline{b}z)} + \frac{\overline{c}(a - b)\overline{z}}{(1 - a\overline{z})(1 - b\overline{z})}\right]\\
&= \frac{c\overline{(a - b)}z(1 - a\overline{z})(1 - b\overline{z}) + \overline{c}(a - b)\overline{z}(1 - \overline{a}z)(1 - \overline{b}z)}{2\left|(1 - \overline{a}z)\right|^2 \left|(1 - \overline{b}z)\right|^2} .
\end{align*}
On the circle, $\text{Re}(cu(e^{it})) = 0$ is a homogeneous trigonometric polynomial equation of degree one, so $\text{Re}(cu(e^{it}))$ has at most two zeros. Equivalently, $cu(\T)$ is purely imaginary at most twice. But since $cu(\T)$ has a component in both the left-half and right-half plane, it follows that $\text{Re}(cu(e^{it}))$ has precisely two zeros, and changes sign precisely twice. The result follows.
\end{proof}

We have one more important observation to make about the behavior of the zeros of $\text{Re}(c(k_a - k_b))$ on the circle.

\begin{proposition}\label{no_contain}
For any constants $c, d \in \C$, with $c$ not a non-negative multiple of $d$, we have on $\T$ that
\[
\{   d(k_a - k_b) + \overline{d(k_a - k_b)} > 0 \} \not\subset \{   c(k_a - k_b) + \overline{c(k_a - k_b)} > 0 \}.
\] 
\end{proposition}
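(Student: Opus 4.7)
The plan is to recast both positivity sets as preimages of open half-planes in $\C$ under the curve $t \mapsto u(e^{it})$, where $u(z) := k_a(z) - k_b(z)$, and then force non-containment by a winding-number argument. Since $c(k_a - k_b) + \overline{c(k_a-k_b)} = 2\text{Re}(cu)$, the first positivity set is the $t$-preimage of $H_d := \{w \in \C : \text{Re}(dw) > 0\}$, while the second is the $t$-preimage of $H_c := \{w \in \C : \text{Re}(cw) > 0\}$. The hypothesis that $c$ is not a real multiple of $d$ says precisely that the bounding lines of $H_c$ and $H_d$ through the origin are distinct (neither positive nor negative real scaling identifies them), so the set $S := H_d \cap \{w : \text{Re}(cw) < 0\}$ is a non-empty open sector of $\C \setminus \{0\}$ with vertex at the origin.

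The key step is to show that $u(\T)$ meets every such open sector. Using the formula $u(z) = \overline{(a-b)}\, z / [(1-\overline{a}z)(1-\overline{b}z)]$ already derived in the proof of Lemma \ref{realequality}, I would note that $u(e^{it}) \neq 0$ for all $t$ and decompose
\[
\arg u(e^{it}) = \arg \overline{(a-b)} + t - \arg(1 - \overline{a} e^{it}) - \arg(1 - \overline{b} e^{it}).
\]
Since $a,b \in \D$, each curve $t \mapsto 1 - \overline{a}e^{it}$ and $t \mapsto 1 - \overline{b}e^{it}$ traces a circle in the disk of radius $|a|<1$ (resp.\ $|b|<1$) about $1$, which misses the origin, and therefore contributes zero net change to the argument. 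The $t$-factor contributes $2\pi$, so a continuous lift of $\arg u(e^{it})$ increases by exactly $2\pi$ over $[0,2\pi]$, and the intermediate value theorem guarantees that it attains every angle in $[0,2\pi)$. In particular, $u(\T)$ meets the open sector $S$.

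Picking $t_0 \in [0,2\pi)$ with $u(e^{it_0}) \in S$ then yields $\text{Re}(d u(e^{it_0})) > 0$ and $\text{Re}(c u(e^{it_0})) < 0$ simultaneously, so $e^{it_0}$ witnesses the desired non-containment. The main obstacle is really just packaging the geometry correctly: identifying $S$ as a non-degenerate open sector exactly when $c$ and $d$ are $\R$-linearly independent, and recognizing that a winding computation for $u$ (rather than any finer information about the shape of $u(\T)$) is all that is needed. Once that framing is in place, the sector-hitting property and the concluding extraction of $t_0$ are essentially formal.
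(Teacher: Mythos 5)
Your proof is correct, and it takes a genuinely different route from the paper's. The paper argues by contradiction: it invokes Lemma \ref{realequality} to locate the two zeros $e^{i\theta_{c_1}}, e^{i\theta_{c_2}}$ of $\operatorname{Re}(cu)$ on $\T$, places them in $\{\operatorname{Re}(e^{it_d}u)\le 0\}$ via the assumed containment, and then derives a contradiction by rotating: since $0$ is interior to $u(\D)$, the imaginary parts of $e^{it_c}u$ at those two zeros have opposite signs, so after rotating by $t_d-t_c$ (not a multiple of $\pi$ by hypothesis) the real parts must also have opposite signs. Your argument is direct and bypasses Lemma \ref{realequality} entirely: you observe that $u$ is analytic on a neighborhood of $\overline{\D}$ with a single simple zero at the origin, so the winding number of $u|_\T$ about $0$ is $1$ (your factor-by-factor argument computation is a correct elementary version of this), whence $\arg u(e^{it})$ attains every value mod $2\pi$ and $u(\T)$ meets every open sector with vertex at the origin --- in particular the sector $H_d\cap\{\operatorname{Re}(cw)<0\}$, which is nonempty and open precisely because $c,d$ are $\R$-linearly independent. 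This buys you slightly more than the stated proposition: $u(\T)$ meets all four sectors cut out by the two boundary lines, so neither positivity set contains the other and both their intersection and the two set differences are nonempty. One shared caveat: both your argument and the paper's implicitly assume $c,d\neq 0$ (for $d=0$ and $c\neq 0$ the hypothesis holds vacuously but the left-hand set is empty); this is harmless in the context where the proposition is applied, but worth keeping in mind.
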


\begin{proof}
Begin by letting $u(z) :=  k_a(z) - k_b(z)$ and observing that $\{ du + \overline{du} > 0 \} = \{ cu + \overline{cu} > 0 \}$ if and only if $c$ and $d$ are non-negative real multiples. Suppose for contradiction that
\[
\{ du + \overline{du} > 0 \} \subsetneq \{ cu + \overline{cu} > 0 \}.
\] 
Then, letting $t_c = \operatorname{Arg}c$ and $t_d = \operatorname{Arg}d$, we have
\[
\{ e^{it_d}u + \overline{e^{it_d}u} > 0 \} \subsetneq \{ e^{it_c}u + \overline{e^{it_c}u} > 0 \}.
\] 

Let $e^{i\theta_{c_1}}$ and $e^{i\theta_{c_2}}$ be the solutions to $cu + \overline{cu} = 0$ on $\T$. Similarly, let $e^{i\theta_{d_1}}$ and $e^{i\theta_{d_2}}$ be the solutions to $du + \overline{du} = 0$ on $\T$. By the containment hypothesis and Lemma \ref{realequality}, we have $e^{i\theta_{c_1}}, e^{i\theta_{c_2}} \in \{ du + \overline{du} \le 0 \}$ (we include equality with zero as two of the roots $e^{i\theta_{c_j}}, e^{i\theta_{d_j}}$, $j=1,2$, may be equal). 
Without loss of generality, this implies that $\text{Re}(e^{it_d}u(e^{i\theta_{c_1}})) \le 0$ and $\text{Re}(e^{it_d}u(e^{i\theta_{c_2}})) < 0$. 

By definition, we also have that $\text{Re}(e^{it_c}u(e^{i\theta_{c_1}})) = \text{Re}(e^{it_c}u(e^{i\theta_{c_2}})) = 0$. Further, since $0 \in e^{it_c}u(\overline{\D})$, we have that $\text{Im}(e^{it_c}u(e^{i\theta_{c_1}}))$ and  $\text{Im}(e^{it_c}u(e^{i\theta_{c_2}})) $ have different signs. It now follows that for any $\theta \in \R$ not a multiple of $\pi$, the signs of $\text{Re}(e^{i\theta}e^{it_c}u(e^{i\theta_{c_1}}))$ and  $\text{Re}(e^{i\theta}e^{it_c}u(e^{i\theta_{c_2}}))$ are different. In particular, taking $\theta = t_d - t_c$ (by hypothesis not a multiple of $\pi$), we see that $\text{Re}(e^{i(t_d - t_c)}e^{it_c}u(e^{i\theta_{c_1}}))$ and  $\text{Re}(e^{i(t_d - t_c)}e^{it_c}u(e^{i\theta_{c_2}}))$ have different signs. But 
\[
\text{Re}(e^{i(t_d - t_c)}e^{it_c}u(e^{i\theta_{c_1}})) = \text{Re}(e^{it_d}u(e^{i\theta_{c_1}}))
\]
and
\[
\text{Re}(e^{i(t_d - t_c)}e^{it_c}u(e^{i\theta_{c_2}})) = \text{Re}(e^{it_d}u(e^{i\theta_{c_2}})).
\]
This is a contradiction.
\end{proof}

For $\phi \in L^\infty$, recall that the set of eigenvalues of $T_\phi^t$ relative to $\mathscr{A}_{a,b}$ is 
\[
\Lambda_\phi^{a,b} := \bigcup_{t\in \widehat{\C}} \sigma_p(T^{t}_\phi).
\]
We now identify  $\Lambda_\phi^{a,b}$ with annihilators of $\mathscr{A}_{a,b} + \mathscr{A}_{a,b}^* :=  \{h_1+\overline{h_2}: h_1,h_2\in \mathscr{A}_{a,b}\}$.

\begin{proposition}\label{ann_rep}
Let $\phi \in L^\infty$ be real-valued. There exists a $c\in \C$ such that for every $\lambda\in \Lambda_\phi^{a,b}$, there exists an outer function $G_\lambda$ such that
\[
(\phi - \lambda) |G_\lambda|^2 = c(k_a  - k_b) + \overline{c(k_a  - k_b)}.
\]
Moreover, the constant $c$ is unique up to a non-negative real multiple.
\end{proposition}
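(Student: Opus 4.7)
The plan is to combine Propositions \ref{praisejesus} and \ref{outerimplieseigen} for the existence of the representation, and then to exploit the rigidity of the positivity sets of $c(k_a-k_b)+\overline{c(k_a-k_b)}$ furnished by Proposition \ref{no_contain} to extract the two uniqueness claims.

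For existence, I would unfold the definition of $\Lambda_\phi^{a,b}$: fix $\lambda \in \Lambda_\phi^{a,b}$, so there exist $t \in \widehat{\C}$ and a nonzero $g \in H^2_t$ with $T^t_{\phi-\lambda}g = 0$. (Since $T^t_\phi$ is self-adjoint for real-valued $\phi$, $\lambda$ is real, hence $\phi-\lambda$ is real-valued.) Factor $g = \theta G_\lambda$ with $\theta$ inner and $G_\lambda$ outer. Then Proposition \ref{praisejesus} places $G_\lambda$ in $\ker(T^s_{\phi-\lambda})$ for $s = G_\lambda(a)/G_\lambda(b)$, and Proposition \ref{outerimplieseigen} produces the desired constant $c_\lambda \in \C$ with
\[
(\phi - \lambda)|G_\lambda|^2 = c_\lambda(k_a - k_b) + \overline{c_\lambda(k_a - k_b)}.
\]

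For uniqueness at a fixed $\lambda$: because $G_\lambda$ is outer, $|G_\lambda|^2 > 0$ $\mu$-a.e., so the identity above forces
\[
\{c_\lambda(k_a - k_b) + \overline{c_\lambda(k_a - k_b)} > 0\} \;=\; \{\phi > \lambda\} \pmod{\mu}.
\]
If another outer $H_\lambda$ produced a constant $c_\lambda'$ with the analogous identity, the positivity sets for $c_\lambda$ and $c_\lambda'$ would coincide, and the contrapositive of Proposition \ref{no_contain} would force $c_\lambda'$ to be a real multiple of $c_\lambda$. For independence in $\lambda$: I would take $\lambda_1, \lambda_2 \in \Lambda_\phi^{a,b}$ with $\lambda_1 \leq \lambda_2$, so that $\{\phi > \lambda_2\} \subseteq \{\phi > \lambda_1\}$; the displayed identity for each $\lambda_j$ converts this into an inclusion of the corresponding positivity sets, and Proposition \ref{no_contain} again yields that $c_{\lambda_1}$ is a real multiple of $c_{\lambda_2}$.

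The one step requiring care beyond this bookkeeping is to rule out $c_\lambda = 0$ before invoking Proposition \ref{no_contain}: if $c_\lambda = 0$, then $(\phi - \lambda)|G_\lambda|^2 \equiv 0$ a.e., and since $|G_\lambda|^2 > 0$ a.e.\ this forces $\phi \equiv \lambda$, contradicting the standing non-constancy assumption on $\phi$. With $c_\lambda \neq 0$ secured, the positivity sets above are genuine proper sub-arcs by Lemma \ref{realequality}, and Proposition \ref{no_contain} applies as intended.
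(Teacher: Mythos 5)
Your proposal is correct and follows essentially the same route as the paper's proof: existence via Proposition \ref{praisejesus} followed by Proposition \ref{outerimplieseigen}, and uniqueness/independence of $c$ by comparing the positivity sets $\{\phi>\lambda_j\}$ with those of $c_j(k_a-k_b)+\overline{c_j(k_a-k_b)}$ and invoking Proposition \ref{no_contain}. Your added care in ruling out $c_\lambda=0$ (via the non-constancy of $\phi$) and in spelling out uniqueness at a fixed $\lambda$ is a welcome refinement but does not change the argument.
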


\begin{proof}
By definition, if $\lambda \in \Lambda_\phi^{a,b}$, then there exists $t \in \widehat{\C}$ and $g_\lambda \in H^2_t$ such that $T_\phi^t g_\lambda = \lambda g_\lambda$, or, equivalently, $g_\lambda \in \ker(T_{\phi-\lambda}^t)$. Further, by Proposition \ref{praisejesus}, we know the outer part of $g_\lambda$, say $G_\lambda$,  belongs to $\ker(T_{\phi-\lambda}^s)$, where $s = G_\lambda(a)/G_\lambda(b)$. Now, use Proposition \ref{outerimplieseigen} applied to $\phi - \lambda$ to get the desired result. 

To see that $c$ is unique up to a non-negative constant, let  $\lambda_1, \lambda_2 \in \Lambda_\phi^{a,b}$ be distinct with
\[
(\phi - \lambda_j) |G_{\lambda_j}|^2 = c_j(k_a  - k_b) + \overline{c_j(k_a  - k_b)}, \ \ \ j=1,2.
\] 
Without loss of generality, suppose $\lambda_1 > \lambda_2$. Then $\{ \phi > \lambda_1 \} \subseteq \{ \phi > \lambda_2 \}$, or, equivalently, $\{c_1(k_a  - k_b) + \overline{c_1(k_a  - k_b)} > 0\} \subseteq \{ c_2(k_a  - k_b) + \overline{c_2(k_a  - k_b}) >0 \}$. It now follows from Proposition \ref{no_contain} that $c_1$ and $c_2$ must be non-negative real multiples of each other. 
\end{proof}

We now note that the collection of outer functions in the kernel of $T^t_\phi$, for fixed real-valued $\phi$, is quite small. In fact, all such outer functions are essentially unique.

\begin{proposition} \label{outerinkerneltheorem}
Let $\phi \in L^\infty$ be real-valued and let $\mathcal{N}$ denote the collection of outer functions in $H^2$. Then there is at most one $t \in \widehat{\C}$ so that $\ker(T_\phi^t) \cap \mathcal{N}$ is non-empty. Moreover, when $\ker(T_\phi^t) \cap  \mathcal{N}$ is non-empty, it is equal to the span of a single outer function.
\end{proposition}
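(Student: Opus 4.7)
The plan is to extract, for each outer $G \in \ker(T^t_\phi) \cap \mathcal{N}$, the complex constant $c$ supplied by Proposition \ref{outerimplieseigen}, and to argue that this constant together with $\phi$ determines the modulus $|G|$ on $\T$ up to a positive real scalar. Since outer functions are uniquely determined by their boundary moduli up to a unimodular constant, this will simultaneously pin down both $t$ (through the ratio $G(a)/G(b)$) and $G$ itself up to a complex scalar.

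Concretely, suppose $G_1 \in \ker(T^{t_1}_\phi) \cap \mathcal{N}$ and $G_2 \in \ker(T^{t_2}_\phi) \cap \mathcal{N}$. Set $u = k_a - k_b$. By Proposition \ref{outerimplieseigen}, there are constants $c_1, c_2 \in \C$ with
\[
\phi |G_j|^2 = c_j u + \overline{c_j u} \quad \text{a.e. on } \T, \qquad j = 1,2.
\]
Since $\phi$ is nontrivial and $|G_j| \neq 0$ a.e.\ (as $G_j$ is outer), we have $c_j \neq 0$, and the set $\{\phi > 0\}$ coincides with $\{c_j u + \overline{c_j u} > 0\}$ for each $j$. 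Equality of these two sets forces, via Proposition \ref{no_contain}, that $c_1$ is a real multiple of $c_2$; a sign comparison then gives $c_1 = \alpha c_2$ for some $\alpha > 0$. Combining the two displayed equations yields $\phi(|G_1|^2 - \alpha |G_2|^2) = 0$ a.e., and since $\phi \neq 0$ a.e.\ (the right side of the first identity vanishes only at the two points on $\T$ produced in the proof of Lemma \ref{realequality}), we conclude $|G_1|^2 = \alpha |G_2|^2$ a.e. on $\T$. Two outer functions with equal boundary moduli differ by a unimodular constant, so $G_1 = \gamma G_2$ for some $\gamma \in \C$. Since $G_2$ is outer, $G_2(a), G_2(b) \neq 0$, and so $t_1 = G_1(a)/G_1(b) = G_2(a)/G_2(b) = t_2$. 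This proves both uniqueness of $t$ and that any two outer elements of the kernel lie on the same complex line.

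The main obstacle is the passage from $\phi |G_1|^2 = \alpha \phi |G_2|^2$ to the pointwise equality $|G_1|^2 = \alpha |G_2|^2$: this depends on knowing that $\phi$ vanishes only on a null set, which must be deduced from the representation $\phi |G_1|^2 = c_1 u + \overline{c_1 u}$ itself together with the two-zero observation from Lemma \ref{realequality}. A minor point to address is that $t \in \{0, \infty\}$ cannot arise for outer kernel elements, since $H^2_0 = B_a H^2$ and $H^2_\infty = B_{a,b} H^2$ consist entirely of functions vanishing at $a$ (Proposition \ref{onbasis}) and so contain no outer functions; this justifies restricting the argument to $t \in \C \setminus \{0\}$ where the ratio $G(a)/G(b)$ is well-defined and nonzero.
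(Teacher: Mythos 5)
Your proof is correct and follows essentially the same route as the paper: apply Proposition \ref{outerimplieseigen} to get $\phi|G_j|^2 = c_j(k_a-k_b)+\overline{c_j(k_a-k_b)}$, invoke Proposition \ref{no_contain} to force $c_1$ and $c_2$ to be positive real multiples of one another, and conclude that $|G_1|$ and $|G_2|$ are proportional on $\T$, hence the outer functions themselves are proportional and determine the same $t$. If anything you are slightly more careful than the paper at the step where $\phi$ is cancelled (justifying $\phi\neq 0$ a.e.\ via the two-zero count of Lemma \ref{realequality}) and in ruling out $t\in\{0,\infty\}$, but the argument is the same in substance.
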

\begin{proof}
Suppose $G_1\in \ker(T^t_\phi)$ and $G_2\in \ker(T^s_\phi)$ are outer, with $s \neq t$. From Proposition \ref{outerimplieseigen}, there exist constants $c_1,c_2\in \C$ such that
\[
\phi|G_j|^2 = c_j(k_a-k_b) + \overline{c_j(k_a - k_b)}, \ \ \ j = 1,2.
\]
Since $|G_1|^2$ and $|G_2|^2$ are non-negative, $c_1(k_a-k_b) + \overline{c_1(k_a - k_b)}$ and $c_2(k_a-k_b) + \overline{c_2(k_a-k_b)}$ must be positive (negative) on the same subsets of $\T$. Proposition \ref{no_contain} indicates that this can happen if and only if $c_1$ is a non-negative real multiple of $c_2$. In turn, there exists a positive constant $r \in \R$ so that $\phi|G_1|^2 =  \phi r^2|G_2|^2$. Since outer functions are determined by their modulus on the unit circle, it follows that $G_1$ and $G_2$ are constant multiples of one another and therefore $G_2 \in \operatorname{span} \{G_1\}$. Hence $G_2 \in \ker(T_\phi^t) \cap \mathcal{N}$.

\end{proof}

Marching toward our proof of Theorem \ref{mainresult2}, we pause to record a lemma that will be used for a key proposition.

\begin{lemma}[{\cite[p. 146]{theoryofhb}}] \label{outercreation}
Given $w\in L^p(\T)$ such that $\log(|w|)\in L^1(\T)$, the following function
\[
W(z) = \exp\left( \frac{1}{2\pi}\int_0^{2\pi} \frac{e^{i\theta}+z}{e^{i\theta}-z}\, \log(|w(e^{i\theta})|) \mathop{d\theta}\right),
\]
defined for $z\in \D$, is an outer function in $H^p$. Moreover, $|W|=|w|$ on $\T$.
\end{lemma}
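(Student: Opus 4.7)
The strategy is to recognize the integrand as a Herglotz/Cauchy-type kernel whose real part is the Poisson kernel, so that $W$ splits naturally as $W(z)=\exp(u(z)+iv(z))$, where $u$ is the Poisson integral of $\log|w|$ and $v$ is its harmonic conjugate. The plan is then to verify the three properties in turn: $W$ is holomorphic on $\D$, $W$ lies in $H^p$, and the boundary values of $|W|$ agree with $|w|$, the outerness being then immediate from the mean value of $u$ at the origin.

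First I would observe that since $\log|w|\in L^1(\T)$, the function $\log W(z)$ is analytic on $\D$: the kernel $\frac{e^{i\theta}+z}{e^{i\theta}-z}$ is holomorphic in $z$ for fixed $\theta$ and locally uniformly bounded for $z$ in compact subsets of $\D$, so one can differentiate under the integral sign. Exponentiating, $W$ is holomorphic and non-vanishing on $\D$, with
\[
\log|W(z)|=\operatorname{Re}\log W(z)=\frac{1}{2\pi}\int_0^{2\pi}\frac{1-|z|^2}{|e^{i\theta}-z|^2}\,\log|w(e^{i\theta})|\,d\theta,
\]
i.e.\ $\log|W|$ is precisely the Poisson integral $P[\log|w|]$.

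Next, to obtain the $H^p$ bound, I would invoke Jensen's inequality against the probability measure $P_r(t-\theta)\,d\theta/(2\pi)$ (convexity of $\exp$):
\[
|W(re^{it})|^p=\exp\!\left(p\int_0^{2\pi}\!\! P_r(t-\theta)\log|w(e^{i\theta})|\tfrac{d\theta}{2\pi}\right)\le \int_0^{2\pi}\!\! P_r(t-\theta)|w(e^{i\theta})|^p\tfrac{d\theta}{2\pi}.
\]
Integrating in $t$ and using Fubini with $\int P_r(t-\theta)\,dt/(2\pi)=1$ gives $\|W_r\|_p\le\|w\|_p$ for every $r<1$, so $W\in H^p$ with $\|W\|_{H^p}\le \|w\|_p$. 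The boundary behavior then follows from Fatou's theorem on Poisson integrals: $\log|W(re^{it})|\to \log|w(e^{it})|$ almost everywhere as $r\to 1^-$, hence $|W(e^{it})|=|w(e^{it})|$ a.e.\ on $\T$, where the left side denotes the nontangential boundary value guaranteed to exist by $W\in H^p$.

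Finally, outerness is the cleanest step: by definition (or by any of the standard equivalent characterizations) $W\in H^p$ is outer provided $\log|W(0)|=\frac{1}{2\pi}\int_0^{2\pi}\log|W(e^{i\theta})|\,d\theta$. But evaluating the representation of $\log|W|$ at $z=0$ collapses the Poisson kernel to $1$, giving $\log|W(0)|=\frac{1}{2\pi}\int_0^{2\pi}\log|w|\,d\theta$, which matches the boundary integral just computed. The only technical subtlety I would expect to watch for is justifying interchange of limits in the Fatou step when $\log|w|$ is only in $L^1$ (not $L^\infty$), but this is precisely the content of the classical Fatou theorem for Poisson integrals of $L^1$ data, so no additional hypothesis is needed.
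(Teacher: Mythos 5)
The paper offers no proof of this lemma at all---it is quoted directly from the cited reference---so there is nothing internal to compare against. Your argument is the standard textbook proof (the Poisson-integral representation of $\log|W|$, Jensen's inequality plus Fubini for the $H^p$ bound, Fatou's theorem for the boundary values, and the arithmetic--geometric mean equality at the origin for outerness) and it is correct; the only cosmetic remark is that the Jensen-and-integrate step as written assumes $p<\infty$, the case $p=\infty$ being handled instead by the trivial pointwise bound $P[\log|w|]\le\log\|w\|_\infty$ (irrelevant here, since the paper only invokes the lemma with $p=2$).
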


We now introduce some important notation. For $c\in \C$, let
\[
S_c^- := \{ z\in \T : c(k_a-k_b)+\overline{c(k_a-k_b)} <0\}
\]
and
\[
S_c^+ := \{ z\in \T : c(k_a-k_b)+\overline{c(k_a-k_b)} >0 \}.
\]
It turns out that these sets are crucial in understanding the relative eigenvalues of $T_\phi^t$.

\begin{proposition} \label{hitter}
Let $\phi \in L^\infty$ be real-valued. If there exist $c \in \C$ and $\beta\in \R$ such that 
            \[
            \esssup\{ (\phi - \beta)\rvert_{S_c^-} \} = m < 0 < M = \essinf\{ (\phi-\beta)\rvert_{S_c^+} \},
            \]
then
\begin{enumerate}[(i)]
    \item \label{hitter1} $(m + \beta, M + \beta) \subseteq \Lambda_\phi^{a,b}$.
    \item \label{hitter2} For every $\beta+\lambda \in (m + \beta, M + \beta)$, there exists an essentially unique outer function $G_{\beta,\lambda}$ such that
        \[
        (\phi - (\beta+\lambda)) |G_{\beta,\lambda}|^2 = c(k_a-k_b) + \overline{c(k_a-k_b)}.
        \]
\item The endpoints $M+\beta$ and $m+\beta$ are elements of $\Lambda_\phi^{a,b}$  if and only if 
\[
\frac{c(k_a-k_b) + \overline{c(k_a-k_b)}}{\phi - (M+\beta)} \textrm{\ \ \ and \ \ \ } \frac{c(k_a-k_b) + \overline{c(k_a-k_b)}}{\phi - (m+\beta)} 
\]
are integrable.
\end{enumerate}
\end{proposition}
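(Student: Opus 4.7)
The plan is to read the target identity in reverse: an outer $G$ satisfying $(\phi - (\beta+\lambda))|G|^2 = R_c := c(k_a-k_b)+\overline{c(k_a-k_b)}$ is forced to obey $|G|^2 = f$, where $f := R_c/(\phi - (\beta+\lambda))$. I therefore propose to construct $G_{\beta,\lambda}$ directly as the outer function with boundary modulus $\sqrt{f}$ via Lemma \ref{outercreation}; then Proposition \ref{outerimplieseigen} delivers the eigenvalue statement. Everything reduces to verifying that Lemma \ref{outercreation} applies.

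First I would check the sign structure. The hypothesis gives, for any $\lambda \in (m,M)$, $\phi - (\beta+\lambda) \leq m-\lambda < 0$ on $S_c^-$ and $\phi - (\beta+\lambda) \geq M-\lambda > 0$ on $S_c^+$, matching the sign of $R_c$ almost everywhere on $\T$. Hence $f \geq 0$ a.e., and for interior $\lambda$, $|\phi - (\beta+\lambda)|$ is bounded below by $\min(M-\lambda,\lambda-m)>0$, so $f \in L^\infty$. For log-integrability I would write $\log f = \log|R_c| - \log|\phi - (\beta+\lambda)|$: Lemma \ref{realequality} shows that $R_c$ has only two simple zeros on $\T$, so $\log|R_c| \in L^1(\T)$, while in the interior regime $\log|\phi - (\beta+\lambda)|$ is essentially bounded.

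With $f \in L^1$ and $\log f \in L^1$ in hand, Lemma \ref{outercreation} applied to $\sqrt{f}$ yields an outer $G_{\beta,\lambda} \in H^2$ with $|G_{\beta,\lambda}|^2 = f$ on $\T$, which is the identity in (ii). Setting $t := G_{\beta,\lambda}(a)/G_{\beta,\lambda}(b) \in \C\setminus\{0\}$ (well-defined since outer functions are non-vanishing on $\D$), we have $G_{\beta,\lambda} \in H^2_t$, and the backward direction of Proposition \ref{outerimplieseigen} applied to $\phi - (\beta+\lambda)$ gives $G_{\beta,\lambda} \in \ker(T^t_{\phi-(\beta+\lambda)})$. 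Thus $\beta+\lambda \in \sigma_p(T^t_\phi) \subseteq \Lambda_\phi^{a,b}$, establishing (i). Essential uniqueness of $G_{\beta,\lambda}$ is immediate, since any outer function is determined up to a unimodular constant by its boundary modulus.

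For the endpoint clause, the forward direction ($M+\beta \in \Lambda_\phi^{a,b}$, similarly $m+\beta$, implies integrability) uses Propositions \ref{praisejesus}, \ref{outerimplieseigen}, and \ref{ann_rep}: together they supply an outer $G$ with $(\phi-(M+\beta))|G|^2 = \tilde{c}(k_a-k_b) + \overline{\tilde{c}(k_a-k_b)}$ for some $\tilde{c}$ which is a real multiple of $c$, so $f$ is a real multiple of $|G|^2 \in L^1$. For the converse direction, the construction above works, but the delicate step is log-integrability, since $|\phi-(M+\beta)|$ is no longer bounded below on $S_c^+$. On $S_c^-$, $|\phi-(M+\beta)|$ is still bounded below by $M-m$, while on $S_c^+$, $\log^+ f \leq f \in L^1$, and $\log^-f = \log^+(1/f)$ is controlled above by a constant plus $\log^+(1/R_c)$, the latter integrable thanks to the first-order vanishing of $R_c$ from Lemma \ref{realequality}. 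This log-integrability check is the main obstacle throughout, and its resolution rests ultimately on the simple-zero structure of $R_c$ on $\T$.
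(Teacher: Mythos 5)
Your proposal is correct and follows essentially the same route as the paper: build $G_{\beta,\lambda}$ as the outer function with boundary modulus $\bigl(R_c/(\phi-(\beta+\lambda))\bigr)^{1/2}$ via Lemma \ref{outercreation}, checking non-negativity and log-integrability, and then invoke the backward direction of Proposition \ref{outerimplieseigen}. If anything you are slightly more thorough than the paper at the endpoints, where you verify $\log^+ f \le f \in L^1$ explicitly and supply the ``only if'' half of the endpoint claim (via Proposition \ref{ann_rep}), which the paper leaves implicit.
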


\begin{proof} 
Let $\lambda \in (m,M)$ and consider the function
\[
\psi:= \frac{c(k_a-k_b) + \overline{c(k_a-k_b)}}{\phi - (\beta +\lambda)}.
\]
By definition of $M$, on $S_c^+$ we have, $\lambda < M \leq \phi - \beta$ and hence $0<\phi - (\beta+\lambda)$. Additionally, $\phi - (\beta+\lambda)$ is uniformly bounded away from $0$ on $S_c^+$. Similarly, by definition of $m$, the function $\phi-(\beta+\lambda)$ is negative and uniformly bounded away from zero on $S_c^-$. In turn, $|\phi - (\beta+\lambda)|$ is uniformly bounded away from zero. Further, by definition of $S_c^+$ and $S_c^-$, it is clear that $\psi$ is non-negative.

Now consider
\[
\log(\psi) = \log(|c(k_a-k_b) + \overline{c(k_a-k_b)}|) - \log(|\phi - (\beta+\lambda)|).
\]
Note first that $\int_\T \log(|\phi - (\beta+\lambda) |) \mathop{dm} < \infty$, so that 
\[
-\int_\T \log(|\phi - (\beta+\lambda) |) \mathop{dm} > -\infty.
\] 
It follows from Lemma \ref{realequality} that 
\[
\int_\T \log(|c(k_a-k_b) + \overline{c(k_a-k_b)}|) \mathop{dm} > -\infty.
\] 
Altogether, we find that
\[
\int_\T \log(\psi) \mathop{dm}> -\infty.
\]
Thus, since $\T$ is compact, it now follows that $\log(\psi)\in L^1$. Now consider the function $\psi^{1/2}$. Since $\psi \in L^1$ and is non-negative, we have that $\psi^{1/2} \in L^2$. Further, $\log(\psi^{1/2}) = \textstyle \frac{1}{2}\log(\psi) \in L^1$.

Now, since $\psi^{1/2}\in L^2$ and $\log(\psi^{1/2})\in L^1$, it follows from Lemma \ref{outercreation} that the function
\[
G_{\beta,\lambda}(z):= \exp\left( \frac{1}{2\pi}\int_0^{2\pi} \frac{e^{i\theta}+z}{e^{i\theta}-z} \log(|\psi^{1/2}(e^{i\theta})|) \mathop{d\theta}\right) 
\]
is an outer function in $H^2$ with $|G_{\beta,\lambda}|=\psi^{1/2}$ on $\T$. Thus, we have an outer function $G_{\beta,\lambda} \in H^2$ with
\[
|G_{\beta,\lambda}|^2 = \psi =  \frac{c(k_a-k_b) + \overline{c(k_a-k_b)}}{\phi-(\beta+\lambda)},
\]
where the last equality follows from $\psi$ being non-negative. 

It now follows from Proposition \ref{outerimplieseigen} that $T^t_{\phi-(\beta+\lambda)} G_{\beta,\lambda} = 0$ for $t = \textstyle \frac{G_{\beta,\lambda}(a)}{G_{\beta,\lambda}(b)}$ (where $G_{\beta,\lambda}(b)\neq 0$ since $G_{\beta,\lambda}$ is outer). In light of Proposition \ref{outerinkerneltheorem}, we have that $G_{\beta,\lambda}$ is unique up to a constant. Rewriting $T^t_{\phi-(\beta+\lambda)} G_{\beta,\lambda} = 0$, we find that $T^t_\phi G_{\beta,\lambda} = (\lambda+\beta) G_{\beta,\lambda}$ and therefore $\lambda+\beta$ is an eigenvalue for $T^t_\phi$. This means $\lambda+\beta \in \Lambda_\phi^{a,b}$ and therefore $(m + \beta, M + \beta) \subseteq \Lambda_\phi^{a,b}$.

If we require the integrability of  
\[
\frac{c(k_a-k_b) + \overline{c(k_a-k_b)}}{\phi - (M+\beta)} \textrm{\ \ \ and \ \ \ } \frac{c(k_a-k_b) + \overline{c(k_a-k_b)}}{\phi - (m+\beta)},
\]
the above argument holds for $\lambda = M$ and $\lambda = m$. 
\end{proof}

Finally, we have the machinery necessary to prove Theorem \ref{mainresult2}.

\begin{theorem} \label{mainresult2} 
If $\phi\in L^\infty$ is real-valued, then $\Lambda_\phi^{a,b}$ is either empty, a point, or an interval.
In particular, $\Lambda_\phi^{a,b}$ is connected. 
\end{theorem}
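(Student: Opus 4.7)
The plan is to combine Propositions \ref{ann_rep} and \ref{hitter}. Assume $\Lambda_\phi^{a,b}$ contains at least two distinct values $\lambda_1 < \lambda_2$; if the set is empty or a singleton, there is nothing to prove.

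By Proposition \ref{ann_rep}, for each $\lambda \in \Lambda_\phi^{a,b}$ there exist an outer function $G_\lambda$ and a nonzero constant $c_\lambda \in \C$ (nonzero because $\phi$ is non-constant) satisfying
\[
(\phi-\lambda)|G_\lambda|^2 \;=\; c_\lambda(k_a-k_b) + \overline{c_\lambda(k_a-k_b)},
\]
and $c_\lambda$ is determined up to a real multiple, independently of $\lambda$. Fix one such representative $c$. Because $G_\lambda$ is outer, $|G_\lambda|^2 > 0$ a.e.\ on $\T$, so the sign of $\phi - \lambda$ matches the sign of $c_\lambda(k_a-k_b) + \overline{c_\lambda(k_a-k_b)}$; choose the sign of $c$ so that $\phi > \lambda_1$ a.e.\ on $S_c^+$ and $\phi < \lambda_1$ a.e.\ on $S_c^-$. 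A short argument (using Lemma \ref{realequality} to guarantee that both arcs $S_c^{\pm}$ have positive measure) rules out the possibility that $c_\lambda$ is a negative real multiple of $c$ for any other $\lambda \in \Lambda_\phi^{a,b}$: were it so, comparing orientations on $S_c^\pm$ would force the contradictory chain $\lambda < \lambda_1$ on $S_c^+$ and $\lambda > \lambda_1$ on $S_c^-$. Hence the same orientation holds uniformly: $\phi > \lambda$ a.e.\ on $S_c^+$ and $\phi < \lambda$ a.e.\ on $S_c^-$ for every $\lambda \in \Lambda_\phi^{a,b}$.

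Set $m_c := \esssup_{S_c^-}(\phi)$ and $M_c := \essinf_{S_c^+}(\phi)$. The uniform sign condition immediately yields $\Lambda_\phi^{a,b} \subseteq [m_c, M_c]$, and the presence of two distinct eigenvalues forces $m_c < M_c$. For any $\beta \in (m_c, M_c)$, the quantities $\esssup_{S_c^-}(\phi - \beta) = m_c - \beta < 0$ and $\essinf_{S_c^+}(\phi - \beta) = M_c - \beta > 0$ satisfy the hypothesis of Proposition \ref{hitter}, which then delivers $(m_c, M_c) \subseteq \Lambda_\phi^{a,b}$. Thus $\Lambda_\phi^{a,b}$ is one of $(m_c, M_c)$, $[m_c, M_c)$, $(m_c, M_c]$, or $[m_c, M_c]$; in every case an interval, which completes the proof.

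The principal obstacle I expect is the sign-bookkeeping for $c$: Proposition \ref{ann_rep} determines $c$ only up to a nonzero real multiple, and a negative multiple interchanges $S_c^+$ and $S_c^-$. Pinning down a single orientation that is valid for every $\lambda \in \Lambda_\phi^{a,b}$ is what makes the bounds $m_c$ and $M_c$ unambiguous; once that orientation is fixed, the remainder is a direct packaging of the machinery already assembled in Sections 3 and 5.
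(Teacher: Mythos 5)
Your proof is correct and follows essentially the same route as the paper's: Proposition \ref{ann_rep} pins down the constant $c$ up to a real multiple, Proposition \ref{hitter} fills in the open interval, and a sign comparison on $S_c^{\pm}$ bounds $\Lambda_\phi^{a,b}$ from outside. The only difference is cosmetic: where the paper excludes points beyond the interval by deriving a strict containment of positivity sets and invoking Proposition \ref{no_contain} directly, you obtain $\Lambda_\phi^{a,b}\subseteq[m_c,M_c]$ from the a.e.\ positivity of $|G_\lambda|^2$ together with the ``independent of $\lambda$'' clause of Proposition \ref{ann_rep} --- which is itself proved via Proposition \ref{no_contain}, so the same fact is doing the work.
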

\begin{proof}
If $\Lambda_\phi^{a,b}$ is empty or a point, then we are done. Suppose, then, $\Lambda_\phi^{a,b} \neq \emptyset$ and that there exist distinct $\lambda_1,\lambda_2\in \Lambda_\phi^{a,b}$. By Proposition \ref{ann_rep}, there exist outer functions $G_1, G_2 \in H^2$ and $c_1,c_2 \in \C$ such that
\begin{equation*} 
    (\phi - \lambda_j)|G_j|^2 = c_j(k_a-k_b) + \overline{c_j(k_a-k_b)}, \ \ \ j = 1,2,
\end{equation*}
where $c_1$ and $c_2$ differ by a non-negative real constant. By absorbing this constant into $|G_j|^2$ and relabeling appropriately, it follows that there exists a single $c\in \C$ such that
\begin{equation*} 
    (\phi - \lambda_j)|G_j|^2 = c(k_a-k_b) + \overline{c(k_a-k_b)}, \ \ \ j = 1,2.
\end{equation*}
 Without loss of generality, assume $\lambda_1 < \lambda_2$ and let $\beta \in (\lambda_1,\lambda_2)$. Then we have
\[
\phi-\lambda_2 < \phi-\beta < \phi-\lambda_1.
\]
Therefore
\[
(\phi-\beta)\rvert_{S_c^-} < (\phi-\lambda_1)\rvert_{S_c^-} < 0
\]
and
\[
0<(\phi-\lambda_2)\rvert_{S_c^+}<(\phi-\beta)\rvert_{S_c^+}.
\]
Finally, it follows from Lemma \ref{realequality} that the sets $S_c^{\pm}$ are of positive measure. As a result, we can define the constants
\[
m:=\esssup\{ (\phi - \beta)\rvert_{S_c^-} \}
\]
\[
M:=\essinf\{ (\phi-\beta)\rvert_{S_c^+} \}
\]
and conclude that $m<0<M$. It follows from Proposition \ref{hitter} that $(m + \beta, M + \beta)\subseteq \Lambda_\phi^{a,b}$ (with inclusion of endpoints possible).

We claim that this interval is all of $\Lambda_\phi^{a,b}$ and show that if $\lambda+\beta > M$ (resp. $\lambda+\beta<m$), then $\lambda+\beta \notin \Lambda_\phi^{a,b}$. To this end, suppose $\lambda+\beta > M+\beta$, and, for the sake of contradiction, that $\lambda+\beta \in \Lambda_\phi^{a,b}$. By Proposition \ref{outerimplieseigen},
there exists a constant $d\in \C$ and $G_{\beta+\lambda} \in H^2$ outer such that
\[
(\phi-(\beta+\lambda))|G_{\beta+\lambda}|^2 = d(k_a-k_b) + \overline{d(k_a-k_b)}.
\]
Similarly, since $0\in (m,M)$, we have $\beta \in (m+\beta,M+\beta)\subseteq \Lambda_\phi^{a,b}$ and therefore, by part (\ref{hitter2}) of Proposition \ref{hitter},
\[
(\phi-\beta)|G_\beta|^2 = c(k_a-k_b) +\overline{c (k_a-k_b)}
\]
for some outer function $G_\beta \in H^2$. 
Now, since $\lambda+\beta > M+\beta$, we have
\begin{equation*} \label{bigcon}
    \{   \phi-(\beta+\lambda)>0\} \subsetneq \{ \phi-\beta>0\}
\end{equation*}
and therefore
\begin{align*}
   \{ d(k_a-k_b) + \overline{d(k_a-k_b)}>0\} &=  \{  \phi - (\beta+\lambda)>0\}\\
    &\subsetneq\{  \phi-\beta>0\} \\
    &= \{  c (k_a-k_b)+\overline{c (k_a-k_b)}>0\}.
\end{align*}
However, by Proposition \ref{no_contain}, this inclusion cannot occur. Hence we arrive at a contradiction and $\lambda+\beta \notin  \Lambda_\phi^{a,b}$. An analogous arguments holds for $\lambda+\beta < m+\beta$.
\end{proof}

\section{Further Remarks and Questions}
For real-valued symbols, the proof of Theorem \ref{mainresult2} shows, when expecting a relative point spectrum consisting of an interval, the symbol of the Toeplitz operator is quite confined; namely, the hypotheses of Proposition \ref{hitter} must be satisfied. However, there is no mention of explicit hypotheses required on the symbol to guarantee the relative point spectrum to be empty or a point -- the former of which we argue is most often the case. In fact, it can be shown, using arguments very similar to the ones used in the above proofs, that $\Lambda^{a,b}_\phi$ is a point if and only if there exist constants $c \in \C$ and $\beta\in \R$ such that 
\[
\esssup\{ (\phi - \beta)\rvert_{S_c^-} \} =  0  = \essinf\{ (\phi-\beta)\rvert_{S_c^+} \}.
\]
Again, the behavior of $\phi$ is quite restricted. All told, in order for a constrained Toeplitz operator to have non-empty relative spectrum in the real-valued setting, it must have a symbol (up to a translation) with set of positivity (negativity) coinciding with that of $\textrm{Re}(c(k_a - k_b))$ for some $c\in \C$. 

Reflecting back to Section \ref{intro}, Widom's connected-spectrum result held for arbitrary symbols. A result of this generality is still unknown for the two-point and Neil cases:
\begin{question}[Open]
For general $\phi \in L^\infty$, are the spectrum, essential spectrum, and relative point spectrum of $T_\phi^t$ connected? In the Neil algebra setting? 
\end{question}

The Neil and two-point constraints are, heuristically speaking, the building blocks for general finite codimensional subalgebras of $H^\infty$. This notion, due to Gamelin, was formalized in \cite[Theorem 9.8]{Gamelin1}, and reinterpreted in \cite[Theorem 2.1]{pfefferjury}. A natural generalization of the work here, and in \cite{broschinski2014eigenvalues}, would be to consider spectra of Toeplitz operators associated to these algebras:

\begin{question}[Open]
For a general finite codimensional subalgebra of $H^\infty$, do the associated Toeplitz operators have connected relative spectrum?
\end{question}
Here, by `associated' we mean Toeplitz operators acting on subspaces of $H^2$ which carry a representation for the sublagebra.

\subsection*{Statements and Declarations}
The authors declare that they have no conflict of interest.
\subsection*{Acknowledgments}
The authors thank John McCarthy and Scott McCullough for helpful discussion, as well as the reviewer for their thoughtful suggestions.

\bibliography{refs}
\bibliographystyle{abbrv}
\end{document}